\newtheorem{theorem}{Theorem}[section]
\newtheorem{lemma}[theorem]{Lemma}
\newtheorem{corollary}[theorem]{Corollary}
\theoremstyle{definition}
\newtheorem{definition}[theorem]{Definition}
\newtheorem{remark}[theorem]{Remark}
\newcommand{\what}{\widehat}
\newcommand{\R}{\mathbb R}%
\newcommand{\C}{\mathbb C}%
\newcommand{\N}{\mathbb N}%
\newcommand{\z}{\mathfrak z}%
\newcommand{\mv}{\mathfrak v}%
\newcommand{\J}{\mathscr J}%
\newcommand{\LL}{\mathcal L}%
\numberwithin{equation}{section}
\renewcommand\subsubsection{\@secnumfont}{\bfseries}%
\renewcommand\subsubsection{\@startsection{subsubsection}{3}
  \z@{.5\linespacing\@plus.7\linespacing}{-.5em}%
  {\normalfont\bfseries}}
\begin{document}

\title[Regularity and pointwise convergence]{Regularity and pointwise convergence for dispersive equations with asymptotically concave phase on Damek-Ricci spaces}

\author[Utsav Dewan]{Utsav Dewan}
\address{Stat-Math Unit, Indian Statistical Institute, 203 B. T. Rd., Kolkata 700108, India}
\email{utsav\_r@isical.ac.in\:,\: utsav97dewan@gmail.com}

\subjclass[2020]{Primary 35J10, 43A85; Secondary 22E30, 43A90}

\keywords{Pointwise convergence, Dispersive equation, Damek-Ricci spaces, Concave phase, Radial functions.}

\begin{abstract}
We study the Carleson's problem on Damek-Ricci spaces $S$ for dispersive equations:
\begin{equation*} 
\begin{cases}
	 i\frac{\partial u}{\partial t} +\Psi(\sqrt{-\LL} )u=0\:,\:  (x,t) \in S \times \mathbb{R} \:,\\
	u(0,\cdot)=f\:,\: \text{ on } S \:,
	\end{cases}
\end{equation*}
where $\LL= \Delta$, the Laplace-Beltrami operator or $\tilde{\Delta}$, the shifted Laplace-Beltrami operator, so that the corresponding phase function $\psi$ satisfies for some $a \in (0,1)$, the large frequency asymptotic:
\begin{equation*} 
\psi(\lambda)=\lambda^a + \mathcal{O}(1)\:,\:\: \lambda \gg 1\:.
\end{equation*}
For almost everywhere pointwise convergence of the solution $u$ to its radial initial data $f$, we obtain the almost sharp regularity threshold $\beta>a/4$. This result is new even for $\mathbb{R}^n$ and in the special case of the fractional Schr\"odinger equations, generalizes classical Euclidean results of Walther.
\end{abstract}

\maketitle
\tableofcontents

\section{Introduction}
One of the most celebrated problems in Euclidean Harmonic analysis is the Carleson's problem: determining the optimal regularity of the initial condition $f$ of the
Schr\"odinger equation given by
\begin{equation} \label{schrodinger}
\begin{cases}
	 i\frac{\partial u}{\partial t} -\Delta_{\R^n} u=0\:,\:\:\:  (x,t) \in \mathbb{R}^n \times \mathbb{R}\:, \\
	u(0,\cdot)=f\:, \text{ on } \mathbb{R}^n \:,
	\end{cases}
\end{equation}
in terms of the index $\beta$ such that for all $f$ belonging to the inhomogeneous Sobolev space $H^\beta(\mathbb{R}^n)$, the solution $u$ of (\ref{schrodinger}) converges pointwise to $f$, 
\begin{equation} \label{pointwise_convergence}
\displaystyle\lim_{t \to 0+} u(x,t)=f(x)\:,\:\:\text{ almost everywhere }.
\end{equation}
This problem was first studied by Carleson \cite{C} who in dimension $1$, obtained the sufficient condition $\beta \ge 1/4$ for the pointwise convergence (\ref{pointwise_convergence}) to hold. In \cite{DK} Dahlberg-Kenig  showed that  $\beta \ge 1/4$ is also necessary. This completely solved the problem in dimension one and subsequently, posed the question in higher dimensions. After five decades of continuous improvement made by several experts in the field, recently in \cite{DGL, DZ} the sufficient condition $\beta>n/2(n+1)$ has been obtained. This bound is sharp except at the endpoint $\beta=n/2(n+1)$, due to a counterexample by Bourgain \cite{Bourgain}.

The Schr\"odinger equation (\ref{schrodinger}) is a special case of the fractional Schr\"odinger equations for $a>0$:
\begin{equation} \label{frac_schrodinger}
\begin{cases}
	 i\frac{\partial u}{\partial t} +\left(-\Delta_{\R^n}\right)^{a/2} u=0\:,\:\:\:  (x,t) \in \mathbb{R}^n \times \mathbb{R}\:, \\
	u(0,\cdot)=f\:, \text{ on } \mathbb{R}^n \:.
	\end{cases}
\end{equation}
For $a>1$, in dimension $1$, Sj\"olin obtained the sharp bound $\beta \ge 1/4$ in \cite{Sjolin}\:. A higher dimensional analogue of this result was obtained by Prestini in \cite{Prestini} where she obtained the sharp bound $\beta \ge 1/4$ for $a>1$ and radial initial data on $\R^n$. Then more results for $a>1$ and radial initial data on $\R^n$, were obtained  by Sj\"olin (see \cite{Sjolin2, Sjolin3, Sjolin4})\:.

In comparison, in the concave phase case, that is when $a \in (0,1)$ in (\ref{frac_schrodinger}), the situation is drastically different. In dimension $1$, the classical result of Cowling \cite{Cowling} yields a sufficient condition $\beta>a/2$. In \cite{Wa1}, Walther improved this bound down to the almost sharp bound $\beta>a/4$. In his following papers \cite{Wa2, Wa3}, Walther investigated the higher dimensional analogue for radial initial data and again obtained the almost sharp bound $\beta>a/4$\:. 
 
Motivated by the above results of Walther, in this article, we consider the Carleson's problem for dispersive equations with concave phase and radial initial data on Damek-Ricci spaces. Damek-Ricci spaces $S$, also known as Harmonic $NA$ groups, are non-unimodular, solvable extensions of Heisenberg type groups $N$, obtained by letting $A=\R^+$ act on $N$ by homogeneous dilations. The rank one Riemannian Symmetric spaces of noncompact type are the prototypical examples of (and in fact accounts for a very small subclass of the more general class of) Damek-Ricci spaces \cite[p. 643]{ADY}. These spaces are non-positively curved, complete, simply-connected Riemannian manifolds and hence have no conjugate points. Thus a function $f: S \to \C$ is said to be radial if, for all $x$ in $S$, $f(x)$ depends only on the geodesic distance of $x$ from the identity $e$. 

Let $\Delta$ be the Laplace-Beltrami operator
on $S$ corresponding to the left-invariant Riemannian metric. Its $L^2$-spectrum is the half line $(-\infty,  -Q^2/4]$, where $Q$ is the homogeneous dimension of $N$. The corresponding shifted Laplace-Beltrami operator is given by, $\tilde{\Delta}:=\Delta + \frac{Q^2}{4}$\:. For $\LL= \Delta$ and $\tilde{\Delta}$, we consider the dispersive equations
\begin{equation} \label{dispersive}
\begin{cases}
	 i\frac{\partial u}{\partial t} +\Psi(\sqrt{-\LL} )u=0\:,\:  (x,t) \in S \times \R \:,\\
	u(0,\cdot)=f\:,\: \text{ on } S \:,
	\end{cases}
\end{equation}
where $\Psi: [0,\infty) \to \R$ satisfies suitable conditions. Then for a radial function $f$ belonging to the $L^2$-Schwartz class $\mathscr{S}^2(S)_o$ (for the definition see (\ref{schwartz_defn})), the solution of (\ref{dispersive}) is given by,
\begin{equation} \label{dispersive_soln}
S_{\psi,t} f(x):= \int_{0}^\infty \varphi_\lambda(x)\:e^{it\psi(\lambda)}\:\widehat{f}(\lambda)\: {|{\bf c}(\lambda)|}^{-2}\: d\lambda\:,
\end{equation}
where $\varphi_\lambda$ is the spherical function, $\psi$ is the phase function of the corresponding multiplier, $\widehat{f}$ is the Spherical Fourier transform of $f$ and ${\bf c}(\cdot)$ denotes the Harish-Chandra's ${\bf c}$-function. The regularity of the initial data, on the other hand, is realized by considering the fractional $L^2$-Sobolev spaces on $S$. For $\beta \ge 0$, the above Sobolev spaces specialized to radial  functions are defined as \cite{APV}:
\begin{eqnarray} \label{sobolev_space_defn}
&& H^\beta(S)\\
&:=&\left\{f \in L^2(S): {\|f\|}_{H^\beta(S)}:= {\left(\int_0^\infty {\left(\lambda^2 + \frac{Q^2}{4}\right)}^\beta {|\widehat{f}(\lambda)|}^2 {|{\bf c}(\lambda)|}^{-2} d\lambda\right)}^{1/2}< \infty\right\}\nonumber.
\end{eqnarray}

In the recent works \cite{Dewan, DR, Dewan2}, we have studied the Carleson's problem for some well-known dispersive equations with convex phase such as fractional Schr\"odinger equations with $a>1$, the Beam equation and the Boussinesq equation, with radial initial data on $S$. To complement the above results, in this article, we consider dispersive equations with phase satisfying the following notion of {\em asymptotic concavity}:
\begin{definition} \label{asymp_concavity}
For $a \in (0,1)$, a dispersive equation (\ref{dispersive}) is called {\em asymptotically concave of degree $a$}, if the phase function $\psi$ of its multiplier is a real-valued continuous function on $[0,\infty)$, $C^\infty$ away from origin and satisfies,
\begin{equation} \label{concavity}
\psi(\lambda)=\lambda^a + \mathcal{O}(1)\:,\:\: \lambda \gg 1\:.
\end{equation}
\end{definition}

\begin{remark} \label{example}
For $a \in (0,1)$, the particular choice $\Psi(\lambda):=\lambda^a$ in (\ref{dispersive}) yields the fractional Schr\"odinger equations corresponding to $\Delta$ and $\tilde{\Delta}$\:. Both of these equations are {\em asymptotically concave of degree $a$} as the corresponding phases are given by,
\begin{equation} \label{example_phase}
\psi_1(\lambda)= \left(\lambda^2+\frac{Q^2}{4}\right)^{\frac{a}{2}}\:\text{ and } \psi_2(\lambda)= \lambda^a \:\:\text{ respectively }.
\end{equation}  
\end{remark}

To study the pointwise convergence of the solution to its initial data, we define the associated maximal function,
\begin{equation} \label{maximal_fn_defn}
S_\psi^* f(x):= \displaystyle\sup_{0<t<1} \left|S_tf(x)\right|\:.
\end{equation}
Our first result is the following $L^2_{loc}$ boundedness for the maximal function:
\begin{theorem} \label{maximal_bddness_thm}
Let $f$ be a radial $L^2$-Schwartz class function on $S$ and let $B_R$ denote the geodesic ball centered at the identity with radius $R>0$. Then for asymptotically concave dispersive equations of degree $a \in (0,1)$, 
\begin{equation} \label{maximal_bddness_inequality}
{\|S_\psi^* f\|}_{L^2\left(B_R\right)} \lesssim \: {\|f\|}_{H^\beta(S)}\:,
\end{equation}
holds for all $R>0$ and $\beta > a/4$, with the implicit constant depending only on the radius $R$, the parameters $a$, $\beta$ and the intrinsic geometry of the space.
\end{theorem}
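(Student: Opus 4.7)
The plan is to reduce the Damek-Ricci estimate to a one-dimensional Euclidean oscillatory-integral bound via the Harish-Chandra series expansion of the spherical functions $\varphi_\lambda$, and then invoke Walther's sharp one-dimensional maximal estimate for concave phase. To begin, I would linearize the supremum by selecting a measurable function $t\colon B_R \to (0,1)$ with $|S_\psi^* f(x)| \leq 2|S_{\psi, t(x)} f(x)|$, and decompose $\widehat{f} = \widehat{f}\,\chi_{[0,2]} + \widehat{f}\,\chi_{(2,\infty)}$. The low-frequency contribution is controlled using $|\varphi_\lambda|\leq 1$, boundedness of $|\mathbf{c}(\lambda)|^{-2}$ on $(0,2]$, and Cauchy-Schwarz, yielding a bound by $\|f\|_{L^2(S)} \leq \|f\|_{H^\beta(S)}$.

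For the high-frequency part, I would substitute the expansion $\varphi_\lambda(r) = \mathbf{c}(\lambda)\Phi_\lambda(r) + \mathbf{c}(-\lambda)\Phi_{-\lambda}(r)$, valid for $\lambda \neq 0$ and $r > 0$, where $\Phi_\lambda(r) = e^{(i\lambda - Q/2)r}\sum_{k\geq 0}\Gamma_k(\lambda)e^{-2kr}$ with $\Gamma_0 \equiv 1$ and the remaining coefficients uniformly controlled in $\lambda$. Using $\mathbf{c}(-\lambda) = \overline{\mathbf{c}(\lambda)}$, the leading contribution to $S_{\psi,t}f(r)$ reduces to a sum of two Euclidean-type oscillatory integrals
\[
e^{-Qr/2}\int_2^\infty e^{i(t\psi(\lambda) \pm \lambda r)}\,h_{\pm}(\lambda)\,d\lambda, \qquad h_{\pm}(\lambda) := \widehat{f}(\lambda)/\mathbf{c}(\mp\lambda),
\]
plus error terms from the tails $k \geq 1$ carrying additional $e^{-2kr}$ decay. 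Since the radial volume element on $S$ satisfies $J(r) \asymp e^{Qr}$ for large $r$, the $e^{-Qr}$ from squaring exactly cancels the Jacobian, converting $\int_{B_R}|S_{\psi,t}f|^2\,dx$ into a Euclidean-type $L^2$-integral in the radial variable.

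The problem is then reduced to the one-dimensional maximal estimate for oscillatory integrals with asymptotically concave phase of degree $a$, which by Walther's sharp result \cite{Wa1,Wa2,Wa3} is bounded in $L^2_{\mathrm{loc}}(dr)$ by the Sobolev norm $\|h_\pm\|_{H^\beta(\mathbb{R})}$ for $\beta > a/4$. Matching the Sobolev norms uses the standard large-$\lambda$ asymptotic $|\mathbf{c}(\lambda)|^{-1} \sim \lambda^{\gamma}$ for an exponent $\gamma$ determined by the geometry of $S$: this shows $\|h_\pm\|_{H^\beta(\mathbb{R})}$ is comparable to $\bigl(\int \lambda^{2\beta}|\widehat{f}(\lambda)|^2\,|\mathbf{c}(\lambda)|^{-2}\,d\lambda\bigr)^{1/2}$, which is precisely the high-frequency part of $\|f\|_{H^\beta(S)}$.

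The principal obstacle will be verifying that the lower-order terms in the Harish-Chandra series and the $\mathcal{O}(1)$ correction in $\psi(\lambda) = \lambda^a + \mathcal{O}(1)$ contribute only admissible errors. The factors $e^{-2kr}$ for $k \geq 1$ dominate the exponential growth of $J(r)$ and produce absolutely summable error terms; a bounded correction to the phase amounts to a bounded Fourier multiplier that commutes with the linearized maximal operator and therefore preserves the sharp $a/4$ threshold. Additional technical care is required at the joining frequency $\lambda = 2$, handled by a smooth partition of unity, and at the behavior of $|\mathbf{c}(\lambda)|^{-2}$ near $\lambda = 0$, which is absorbed into the low-frequency estimate.
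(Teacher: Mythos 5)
Your proposal runs into a genuine gap where it uses the Harish-Chandra-type expansion $\varphi_\lambda(r)=\mathbf{c}(\lambda)\Phi_\lambda(r)+\mathbf{c}(-\lambda)\Phi_{-\lambda}(r)$ on all of $B_R$. That series converges uniformly only on compact sets not containing the identity, and the only available bounds on its coefficients are $|\Gamma_\mu(\lambda)|\lesssim \mu^{d}(1+|\lambda|)^{-1}$ with $d\ge 0$; hence the tail you dismiss as carrying "additional $e^{-2kr}$ decay" is only controlled by $\sum_{\mu\ge 1}\mu^{d}e^{-\mu r}\asymp r^{-(d+1)}$, which blows up as $r\to 0$. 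Combined with the prefactor $A(r)^{-1/2}\asymp r^{-(n-1)/2}$, the resulting error term is not square-integrable over $B_R$ near the identity, and in fact the leading term $A(r)^{-1/2}\{\mathbf{c}(\lambda)e^{i\lambda r}+\mathbf{c}(-\lambda)e^{-i\lambda r}\}$ is itself a bad approximation of $\varphi_\lambda(r)$ in the regime $\lambda r\lesssim 1$ (it diverges as $r\to 0$ while $|\varphi_\lambda|\le 1$), a regime that your high-frequency piece $\lambda\ge 2$ still meets for small $r$. This is exactly why the paper decomposes $B_R=\overline{B_{R_0}}\sqcup A(R_0,R)$: near the identity it replaces your expansion by the Bessel series expansion of $\varphi_\lambda$, transfers the main term to the Euclidean radial solution operator on $\R^n$ through the Abel-transform Schwartz correspondence (Lemma \ref{schwartz_correspondence}), and invokes Walther's $n$-dimensional radial maximal estimate (Lemma \ref{Rn_result}); only on the annulus does it use the expansion you propose, there reducing to a one-dimensional bound proved via the oscillatory kernel estimate of Lemma \ref{oscillatory_integral_estimate}, Riesz potentials, and Pitt's inequality. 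Without a separate argument near the identity your reduction to one-dimensional oscillatory integrals does not give the claimed $L^2(B_R)$ bound.

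A second, smaller issue is your treatment of the general phase. Walther's theorems are for the exact phase $|\xi|^a$, and the correction factor $e^{it(s)(\psi(\lambda)-\lambda^a)}$ depends on the space variable through the linearizing function $t(s)$, so it is not a Fourier multiplier and does not "commute with the linearized maximal operator"; boundedness of $\psi(\lambda)-\lambda^a$ alone does not let you quote the $\lambda^a$ result verbatim. The paper handles this by first invoking the local transference principle (Lemma \ref{transference_principle} together with Remark \ref{concavity_transference}), which says that phases of comparable oscillation satisfy the same local maximal estimates, and only then carries out the kernel analysis for the model phase $\lambda^a$. Your intuition that a bounded phase error is harmless is correct in substance, but as written this step needs that transference argument (or an equivalent expansion of $e^{it(s)(\psi(\lambda)-\lambda^a)}$ into space-independent multipliers) to be a proof.
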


Then by standard arguments in the literature (for instance see the proof of Theorem 5 of \cite{Sjolin}), Theorem \ref{maximal_bddness_thm} yields the positive result on pointwise convergence:
\begin{corollary} \label{pointwise_conv_cor}
The solution $u$ of an asymptotically concave dispersive equation of degree $a \in (0,1)$, converges pointwise to the radial initial data $f$,
\begin{equation*}
\displaystyle\lim_{t \to 0+} u(x,t)=f(x)\:,
\end{equation*}
for almost every $x$ in $S$ with respect to the left Haar measure on $S$, whenever $f \in H^\beta(S)$ with $\beta > a/4$.
\end{corollary}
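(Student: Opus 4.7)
The plan is to derive the corollary from Theorem \ref{maximal_bddness_thm} via the standard density-plus-Chebyshev scheme. First I would verify that the radial $L^2$-Schwartz class $\mathscr{S}^2(S)_o$ is dense in the radial part of $H^\beta(S)$. In view of the definition (\ref{sobolev_space_defn}), the spherical Fourier transform is a unitary isomorphism from radial $H^\beta(S)$ onto $L^2\bigl((0,\infty),\,(\lambda^2+Q^2/4)^\beta |\mathbf{c}(\lambda)|^{-2}\, d\lambda\bigr)$, and density then reduces to the corresponding routine statement for the image of $\mathscr{S}^2(S)_o$ on the Fourier side.

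Next, I would establish pointwise convergence on the dense subclass. For $f \in \mathscr{S}^2(S)_o$ and each fixed $x \in S$, the integrand in the representation (\ref{dispersive_soln}) is dominated, uniformly in $t \in \R$, by $\varphi_0(x)\,|\widehat{f}(\lambda)|\,|\mathbf{c}(\lambda)|^{-2}$ (using $|\varphi_\lambda(x)| \leq \varphi_0(x)$ for $\lambda \in \R$), which is integrable in $\lambda$ since $\widehat{f}$ has rapid decay and the Plancherel density grows only polynomially. Dominated convergence combined with the spherical inversion formula then yields $\lim_{t \to 0+} S_{\psi, t}f(x)=f(x)$ pointwise on $S$.

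To transfer this pointwise convergence to all radial $f \in H^\beta(S)$, set
\[
\Omega f(x) := \limsup_{t \to 0+}\bigl|S_{\psi, t}f(x)-f(x)\bigr|,
\]
and choose radial $g_n \in \mathscr{S}^2(S)_o$ with $\|f - g_n\|_{H^\beta(S)} \to 0$. Since $\Omega g_n \equiv 0$ by the previous step, sublinearity gives $\Omega f \leq 2\,S_\psi^*(f - g_n) + |f - g_n|$ almost everywhere. For any $R>0$ and $\alpha > 0$, Chebyshev's inequality combined with Theorem \ref{maximal_bddness_thm} and the trivial bound $\|f-g_n\|_{L^2(S)} \leq C\|f-g_n\|_{H^\beta(S)}$ yields
\[
\bigl|\{x \in B_R : \Omega f(x) > \alpha\}\bigr| \;\lesssim\; \frac{1}{\alpha^2}\,\|f - g_n\|_{H^\beta(S)}^2 \;\xrightarrow{n \to \infty}\; 0.
\]
Taking a countable union over $\alpha = 1/k$ gives $\Omega f = 0$ a.e. on $B_R$, and letting $R \to \infty$ finishes the proof.

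No substantive obstacle arises here: the analytic content is already packaged inside Theorem \ref{maximal_bddness_thm}, and the only ingredients needed are the two standard auxiliary facts above, namely the density of $\mathscr{S}^2(S)_o$ in radial $H^\beta(S)$ and the dominated-convergence argument on the Schwartz class. The sole mild care point is the uniform-in-$t$ domination of the oscillatory integrand, for which the pointwise bound $|\varphi_\lambda(x)| \leq \varphi_0(x)$ on Damek-Ricci spaces is precisely what is required.
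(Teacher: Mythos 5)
Your argument is correct and is precisely the standard density--Chebyshev scheme that the paper invokes by citing the proof of Theorem 5 of Sj\"olin; the paper gives no further details, so there is nothing to compare beyond noting that the domination step can use the even simpler bound $|\varphi_\lambda(x)|\le 1$ from (\ref{phi_lambda_bound}).
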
   
 
Our next result illustrates the almost sharpness of the above results:
\begin{theorem} \label{sharpness_thm}
For asymptotically concave dispersive equations of degree $a \in (0,1)$, the maximal estimate (\ref{maximal_bddness_inequality}) fails if $\beta<a/4$\:.
\end{theorem}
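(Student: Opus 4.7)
The plan is to mimic Walther's counterexample construction from \cite{Wa1,Wa2}: choose a sequence $\{f_N\}\subset\mathscr{S}^2(S)_o$ of radial test functions sharply localised in frequency near a large $\lambda_N$, and along a pointwise-adapted time profile $t_*(x)\in(0,1)$ show that $|S_{\psi,t_*(x)}f_N|$ concentrates in $L^2(B_R)$ fast enough to overshoot the $H^\beta$-norm as $\lambda_N\to\infty$, forcing $\beta\geq a/4$. By Remark \ref{example} it is enough to take the model phase $\psi(\lambda)=\lambda^a$; the general asymptotically concave case differs only by a uniformly bounded factor $e^{it\mathcal{O}(1)}$ on each frequency window.

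For a fixed smooth non-negative bump $\chi$ with $\int\chi>0$, set
$$\widehat{f_N}(\lambda)\,:=\,\chi\!\left(\frac{\lambda-\lambda_N}{\delta_N}\right),\qquad \delta_N\,:=\,\lambda_N^{(2-a)/2}.$$
Using the standard asymptotic $|{\bf c}(\lambda)|^{-2}\asymp\lambda^{\dim S-1}$ as $\lambda\to\infty$, the definition (\ref{sobolev_space_defn}) immediately yields
$$\|f_N\|_{H^\beta(S)}^2\;\asymp\;\lambda_N^{2\beta+(\dim S-1)+(2-a)/2}.$$
To bound (\ref{dispersive_soln}) from below, I would insert the standard Jacobi/Mehler-type representation of $\varphi_\lambda$, which in the mixed asymptotic regime $r\ll 1$ and $\lambda r\gg 1$ takes a Bessel form yielding $\varphi_\lambda(r)|{\bf c}(\lambda)|^{-2}\sim p(\lambda,r)\bigl(e^{i\lambda r}+e^{-i\lambda r}\bigr)$ with amplitude $p(\lambda,r)\asymp r^{-(\dim S-1)/2}\lambda^{(\dim S-1)/2}$, and isolate the outgoing piece
$$I_-(r,t)\;\sim\;r^{-(\dim S-1)/2}\int\lambda^{(\dim S-1)/2}\,e^{i(t\lambda^a-\lambda r)}\,\widehat{f_N}(\lambda)\,d\lambda,$$
discarding the companion piece $I_+$ (whose phase $t\lambda^a+\lambda r$ is monotone in $\lambda>0$) as $O(\lambda_N^{-\infty})$ via repeated integration by parts.

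Next, choose $t=t_*(r):=r\lambda_N^{1-a}/a$, so that $\phi(\lambda):=t\lambda^a-\lambda r$ has its critical point at exactly $\lambda=\lambda_N$; the constraint $t_*(r)\in(0,1)$ pins $r$ to the shell $\bigl[\lambda_N^{-1},\,a\lambda_N^{a-1}\bigr]$, the lower bound enforcing validity of the Bessel asymptotic. A direct computation gives $|\phi''(\lambda_N)|\delta_N^2=(1-a)r\lambda_N^{1-a}\leq a(1-a)\leq 1/4$ across this shell, so the phase stays within $O(1)$ of $\phi(\lambda_N)$ on $\mathrm{supp}\,\widehat{f_N}$ and the integral is comparable to its constant-phase value:
$$|I_-(r,t_*(r))|\;\gtrsim\;r^{-(\dim S-1)/2}\lambda_N^{(\dim S-1)/2}\delta_N\;=\;r^{-(\dim S-1)/2}\lambda_N^{(\dim S+1-a)/2}.$$
Squaring and integrating against the Damek-Ricci radial volume $J(r)\,dr\asymp r^{\dim S-1}\,dr$ valid for $r\ll 1$,
$$\|S^*_\psi f_N\|_{L^2(B_R)}^2\;\geq\;\int_{\lambda_N^{-1}}^{a\lambda_N^{a-1}}|I_-(r,t_*(r))|^2 J(r)\,dr\;\gtrsim\;\lambda_N^{\dim S+1-a}\cdot\lambda_N^{a-1}\;=\;\lambda_N^{\dim S},$$
and combining with the Sobolev norm computed above gives
$$\frac{\|S^*_\psi f_N\|_{L^2(B_R)}^2}{\|f_N\|_{H^\beta(S)}^2}\;\gtrsim\;\lambda_N^{a/2-2\beta}\;\longrightarrow\;+\infty\quad(\lambda_N\to\infty),$$
whenever $\beta<a/4$, contradicting (\ref{maximal_bddness_inequality}).

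The main technical obstacle is extracting a clean oscillatory asymptotic for $\varphi_\lambda(r)|{\bf c}(\lambda)|^{-2}$ in the doubly asymptotic regime $r\to 0^+$ and $\lambda r\to\infty$, with a remainder controllable uniformly across the frequency window $\mathrm{supp}\,\widehat{f_N}$ so that the quantitative lower bound on the amplitude $p(\lambda,r)$ actually goes through. This can be handled either via the Mehler-Heine limit of Jacobi functions connecting $\varphi_\lambda$ to a Bessel function as $r\to 0^+$, or via a truncated Anker-Damek-Yacoub series expansion of $\varphi_\lambda$, combined with a routine van der Corput estimate to dismiss the non-stationary companion piece $I_+$.
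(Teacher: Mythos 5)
Your construction is essentially the paper's: a frequency bump of width $\lambda_N^{1-a/2}$ centred at $\lambda_N$, the time choice $t_*(r)=r\lambda_N^{1-a}/a$ placing the stationary point of $t\lambda^a-\lambda r$ at $\lambda_N$, the Bessel asymptotics of $\varphi_\lambda$ near the identity, radii $r\asymp\lambda_N^{a-1}$, and the exponent count $\lambda_N^{a/2-2\beta}$ all match (the paper normalises $\widehat{f_N}$ by $N^{-1/2}|{\bf c}(\lambda)|$ so that $\|f_N\|_{H^\beta}\to0$ while $\|Tf_N\|_{L^2}$ stays bounded below, which is equivalent to your ratio computation).

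There are, however, two soft spots. First, your shell $[\lambda_N^{-1},a\lambda_N^{a-1}]$ is too wide for the claimed dismissal of the companion piece $I_+$. On the support of $\widehat{f_N}$ one has $\phi_+'\asymp r$ while the amplitude varies on scale $\delta_N$, so each integration by parts gains only a factor $(r\delta_N)^{-1}$; this is small precisely when $r\gg\delta_N^{-1}=\lambda_N^{(a-2)/2}$, and at the bottom of your shell ($r\sim\lambda_N^{-1}$) one has $r\delta_N\sim\lambda_N^{-a/2}\ll1$, so the integration-by-parts bound is \emph{worse} than the trivial bound and $I_+$ is not $O(\lambda_N^{-\infty})$ there (nor is the Bessel asymptotic comfortably valid, since $\lambda r\sim1$). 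The fix is exactly what the paper does: restrict to the top of the shell, $r\in(a\varepsilon\lambda_N^{a-1},2a\varepsilon\lambda_N^{a-1})$, which carries all of the $L^2$ mass in your final integral anyway and where one integration by parts yields the gain $\lambda_N^{-a/2}$ (compare the paper's estimate (\ref{counter_estimate3})); the error term from the Bessel expansion is controlled there by (\ref{bessel_error_estimate}) and (\ref{coefficient_estimate}). Second, the reduction to the model phase $\lambda^a$ is not just "a uniformly bounded factor": $e^{it(\psi(\lambda)-\lambda^a)}$ is a non-constant unimodular multiplier on the frequency window and cannot be pulled out of the integral. The correct route, used in the paper, is the transference principle (Lemma \ref{transference_principle} together with Remark \ref{concavity_transference}) applied in the contrapositive: if (\ref{maximal_bddness_inequality}) held for some general asymptotically concave $\psi$ with $\beta_0<a/4$, it would transfer to the model equation for some $\beta\in(\beta_0,a/4)$, contradicting the counterexample. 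With these two repairs your argument coincides with the paper's proof.
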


The key idea in the proofs of Theorems \ref{maximal_bddness_thm} and \ref{sharpness_thm}, is to estimate the joint oscillation afforded by the spherical function and the corresponding multiplier and is motivated by the works of Walther \cite{Wa1, Wa2, Wa3}. But there are two major technical departures:
\begin{itemize}
\item Compared to the case of Walther, our results deal with more general phases which are given only in terms of the asymptotic condition (\ref{concavity}). Our approach addresses this general scenario by means of a transference principle (see Lemma \ref{transference_principle}). 
\item Unlike in the Euclidean case, the spherical function on a Damek-Ricci space is not merely a single Bessel function. In fact, it only admits certain series expansions (the Bessel series expansion and a series expansion similar to the classical Harish-Chandra series expansion) depending on the geodesic distance from the identity of the group. Then to capture the aforementioned oscillation from these expansions, to decompose the linearized maximal function into suitable pieces and then to estimate each of them individually, become somewhat technical. In this regard, we also need the improved estimates obtained by Anker-Pierfelice-Vallarino on the coefficients of the above series expansion of the spherical functions (see (\ref{coefficient_estimate})).
\end{itemize}

In this article, we include the case when $N$ is abelian and $\mathfrak{n}$ coincides with its center in the definition of $H$-type groups, as a degenerate case, so that the Real hyperbolic spaces are also included in the class of Damek-Ricci spaces (see \cite[pp. 209-210]{CDKR}). 

This article is organized as follows. In section $2$, we recall certain aspects of Euclidean Fourier Analysis, the essential preliminaries about Damek-Ricci spaces and Spherical Fourier Analysis thereon and a useful oscillatory integral estimate. In section $3$, we obtain some auxiliary results that are required in the proof of our main results and may be of independent interest. Theorems \ref{maximal_bddness_thm} and \ref{sharpness_thm} are proved in sections $4$ and $5$ respectively. Finally, we conclude in section $6$, by making some remarks and posing some new problems.
 
\section{Preliminaries}
In this section, we recall some preliminaries and fix our notations.
\subsection{Some notations}
Throughout, the symbols `c' and `C' will denote positive constants whose values may change on each occurrence. The enumerated constants $c_0,c_1, \dots$ will however be fixed throughout. $\N$ will denote the set of positive integers. We will also require the notation,
\begin{equation*}
\frac{1}{2}\N =\left\{\frac{1}{2}n \mid n \in \N\right\}\:.
\end{equation*}
For non-negative functions $f_1,\:f_2,\:f_3$ we write, $f_1 \lesssim f_2$ (resp. $f_1 \gtrsim f_2$) if there exists a constant $C \ge 1$, so that
\begin{equation*}
f_1 \le C f_2 \:,\text{ (resp. } f_1 \ge \frac{1}{C} f_2 \text{)}\:.
\end{equation*}
$f_1 \asymp f_2$ will mean there exist constants $C,C'>0$, so that
\begin{equation*}
C f_1 \le f_2 \le C' f_1\:.
\end{equation*}
We write, $f_1=f_2+\mathcal{O}(f_3)$ if 
\begin{equation*}
|f_1-f_2| \lesssim f_3\:.
\end{equation*}
By $\lambda \gg 1$, we mean $\lambda \ge M$ for some sufficiently large $M>1$\:. 
 
\subsection{Fourier Analysis on $\R^n$:}
In this subsection, we recall some Euclidean Fourier Analysis, most of which can be found in \cite{SW}. On $\R$, for ``nice" functions $f$, the Fourier transform $\tilde{f}$ is defined as
\begin{equation*}
\tilde{f}(\xi)= \int_\R f(x)\: e^{-ix\xi}\:dx\:.
\end{equation*}
An important inequality in one-dimensional Fourier Analysis is the Pitt's inequality \cite[p. 489]{Stein}:
\begin{equation} \label{Pitt's_ineq}
{\left(\int_\R {\left|\tilde{f}(\xi)\right|}^2 \:{|\xi|}^{-2 \alpha} d\xi\right)}^{1/2} \lesssim {\left(\int_\R {\left|f(x)\right|}^p\: {|x|}^{\alpha_1 p} dx\right)}^{1/p}\:,
\end{equation}
where $\alpha_1 = \alpha + \frac{1}{2}- \frac{1}{p}$ and the following two conditions are satisfied:
\begin{equation} \label{Pitt's_ineq_cond}
0 \le \alpha_1 < 1 - \frac{1}{p}\:,\: \text{ and } 0 \le \alpha < \frac{1}{2}\:.
\end{equation}
A $C^\infty$ function $f$ on $\R$ is called a Schwartz class function if 
\begin{equation*}
\left|{\left(\frac{d}{dx}\right)}^M f(x)\right| \lesssim {(1+|x|)}^{-N} \:, \text{ for any } M, N \in \N \cup \{0\}\:.
\end{equation*}
We denote by $\mathscr{S}(\R)$ the class of all such functions and $\mathscr{S}(\R)_{e}$ will denote the even subclass of $\mathscr{S}(\R)$. Similarly, $C^\infty_c(\R)_e$ denotes the collection of all even, compactly supported smooth functions on $\R$. 

Let $\beta \in \C$ with $Re(\beta)>0$. The Riesz potential of order $\beta$ is the operator
\begin{equation*}
I_\beta= {(-\Delta_{\R})}^{-\beta/2}\:,
\end{equation*}
which can also be written as,
\begin{equation*}
I_\beta(f)(x)=C \int_\R f(y)\: {|x-y|}^{\beta -1}\:dy\:,
\end{equation*}
for some $C>0$ (depending on $\beta$), whenever $f \in \mathscr{S}(\R)$. For $\beta >0$, the Fourier transform of the Riesz potential of $f \in \mathscr{S}(\R)$ satisfies the following identity:
\begin{equation} \label{riesz_identity}
(I_\beta(f))^\sim(\xi)= C_\beta {|\xi|}^{-\beta}\tilde{f}(\xi)\:.
\end{equation}
For a ``nice" radial function $f$ in $\R^n$, $n \ge 2$, the Fourier transform is defined as
\begin{equation*}
\mathscr{F}f(\lambda):= \int_0^\infty f(r) \J_{\frac{n-2}{2}}(\lambda r)\: r^{n-1}\: dr\:,\:\:\:\:\:\lambda\in [0,\infty),
\end{equation*}
where for all $\mu \ge 0\:, \J_\mu$ denotes the modified Bessel function:
\begin{equation*}
\J_\mu(z)= 2^\mu \: \pi^{1/2} \: \Gamma\left(\mu + \frac{1}{2} \right) \frac{J_\mu(z)}{z^\mu}\:.
\end{equation*}
Here $J_\mu$ are the Bessel functions \cite[p. 154]{SW}. The asymptotic oscillation of Bessel functions is well-studied:
\begin{lemma}\cite[lemma 1]{Prestini} \label{bessel_function_expansion}
Let $\mu \in \frac{1}{2}\N$. Then
there exists a positive constant $A_\mu$ such that
\begin{equation*}
J_\mu(s)= \sqrt{\frac{2}{\pi s}} \cos \left( s - \frac{\pi}{2}\mu - \frac{\pi}{4}\right) + \tilde{E}_\mu(s)\:,
\end{equation*}
where
\begin{equation*}
|\tilde{E}_\mu(s)| \lesssim s^{-3/2}\:,\:\text{ for } s \ge A_\mu\:.
\end{equation*}
\end{lemma}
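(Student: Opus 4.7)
The statement is the classical large-argument asymptotic expansion of Bessel functions restricted to the convenient half-integer index set $\mu \in \frac{1}{2}\N$; the same expansion holds for all $\mu > -1/2$ and the specific index range plays no essential role in the argument. The plan is to derive it from the Poisson integral representation
\[
J_\mu(s) \;=\; \frac{(s/2)^\mu}{\Gamma(\mu+\tfrac{1}{2})\sqrt{\pi}}\, \int_{-1}^{1} e^{ist}\,(1-t^2)^{\mu-1/2}\,dt,
\]
which is valid for $\mu > -1/2$. The point is that for large $s$ the entire contribution comes from the endpoints $t = \pm 1$, since on any compact sub-interval of $(-1,1)$ the phase $st$ has non-vanishing derivative and repeated integration by parts against $e^{ist}$ produces rapid decay $O(s^{-N})$ for every $N$.

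The first step is to insert a smooth partition of unity separating a neighbourhood of $t = 1$, a neighbourhood of $t = -1$, and a middle piece bounded away from the endpoints; the middle piece is negligible by the remark above. Near $t = 1$ I would substitute $t = 1 - u/s$, turning the integrand into $e^{is}\,e^{-iu}\,(u/s)^{\mu-1/2}\,(2-u/s)^{\mu-1/2}$ times the cutoff. Expanding $(2-u/s)^{\mu-1/2} = 2^{\mu-1/2} + O(u/s)$ and using the elementary gamma integral $\int_0^\infty e^{-iu} u^{\mu-1/2}\,du = \Gamma(\mu+\tfrac{1}{2})\,e^{-i\pi(2\mu+1)/4}$, the leading piece evaluates explicitly; the analogous calculation near $t = -1$ contributes the complex conjugate. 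After combining the two endpoint contributions the gamma and prefactor cancellations precisely reproduce
\[
\sqrt{\tfrac{2}{\pi s}}\,\cos\!\left(s - \tfrac{\mu\pi}{2} - \tfrac{\pi}{4}\right).
\]

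The main obstacle is the quantitative control of the remainder. The $O(u/s)$ correction from the amplitude expansion, once integrated, should yield $O(s^{-3/2})$, but a naive estimate of the resulting integrand involves $u^{\mu+1/2}/s$, which for large $\mu$ is not integrable down the real axis; to absorb this tail one performs one further integration by parts on the region $u \gtrsim s^{1/2}$, exploiting the non-stationary phase $e^{-iu}$ to gain a factor of $u^{-1}$. The threshold $A_\mu$ arises as the scale beyond which both the substitution and this bootstrap are valid. An alternative and perhaps quicker route to make rigorous is Liouville--Green (WKB) applied to the Bessel equation $u'' + \bigl(1 - (\mu^2 - 1/4)/s^2\bigr)\,u = 0$ satisfied by $u(s) = \sqrt{s}\,J_\mu(s)$: for $s \gg \sqrt{|\mu^2 - 1/4|}$ the equation is an $O(s^{-2})$ perturbation of $u'' + u = 0$, and matching the sinusoidal asymptotic with the known normalization of $J_\mu$ at infinity yields the same leading term with error of order $s^{-3/2}$.
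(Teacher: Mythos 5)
The paper offers no proof of this lemma: it is quoted verbatim from Prestini [Pr90, Lemma 1] and is the classical large-argument asymptotic of $J_\mu$, whose standard derivation is exactly your main route — the Poisson integral representation, localization to the endpoints $t=\pm1$ by non-stationary phase, and evaluation of the endpoint contributions via the regularized Gamma integral $\int_0^\infty e^{-iu}u^{\mu-1/2}\,du=\Gamma(\mu+\tfrac12)e^{-i\pi(2\mu+1)/4}$; your bookkeeping of the prefactors correctly reproduces $\sqrt{2/(\pi s)}\cos(s-\tfrac{\pi}{2}\mu-\tfrac{\pi}{4})$ with an $O(s^{-3/2})$ remainder, and you correctly identify the only delicate point (the divergence of the naive absolute bound for $\mu\ge 1/2$, cured by contour rotation or integration by parts in $u$). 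One caveat: your Liouville--Green alternative, as stated, is circular, since ``matching with the known normalization of $J_\mu$ at infinity'' presupposes the very asymptotic being proved — the WKB ansatz only yields $\sqrt{s}\,J_\mu(s)=A\cos(s-\phi)+O(s^{-1})$ and the connection constants $A,\phi$ must still be determined by some global input (e.g.\ the integral representation, or the explicit elementary formulas available for half-odd-integer $\mu$), so the Poisson-integral argument should be regarded as the actual proof.
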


\subsection{Damek-Ricci spaces and spherical Fourier Analysis thereon:}
In this section, we will explain the notations and state relevant results on Damek-Ricci spaces. Most of these results can be found in \cite{ADY, APV}.

Let $\mathfrak n$ be a two-step real nilpotent Lie algebra equipped with an inner product $\langle, \rangle$. Let $\mathfrak{z}$ be the center of $\mathfrak n$ and $\mathfrak v$ its orthogonal complement. We say that $\mathfrak n$ is an $H$-type algebra if for every $Z\in \mathfrak z$ the map $J_Z: \mathfrak v \to \mathfrak v$ defined by
\begin{equation*}
\langle J_z X, Y \rangle = \langle [X, Y], Z \rangle, \:\:\:\: X, Y \in \mathfrak v
\end{equation*}
satisfies the condition $J_Z^2 = -|Z|^2I_{\mathfrak v}$, $I_{\mathfrak v}$ being the identity operator on $\mathfrak v$. A connected and simply connected Lie group $N$ is called an $H$-type group if its Lie algebra is $H$-type. Since $\mathfrak n$ is nilpotent, the exponential map is a diffeomorphism
and hence we can parametrize the elements in $N = \exp \mathfrak n$ by $(X, Z)$, for $X\in \mathfrak v, Z\in \mathfrak z$. The group $A = \R^+$ acts on an $H$-type group $N$ by nonisotropic dilation: $(X, Z) \mapsto (\sqrt{a}X, aZ)$. Let $S = NA$ be the semidirect product of $N$ and $A$ under the above action. Then $S$ is a solvable, connected and simply connected Lie group having Lie algebra $\mathfrak s = \mathfrak v \oplus \mathfrak z \oplus \R$\:. We note
that for any $Z \in \mathfrak z$ with $|Z| = 1$, $J_Z^2 = -I_{\mathfrak v}$; that is, $J_Z$ defines a complex structure
on $\mathfrak v$ and hence $\mathfrak v$ is even dimensional. $m_\mv$ and $m_z$ will denote the dimension of $\mv$ and $\z$ respectively. Let $n$ and $Q$ denote dimension and the homogenous dimension of $S$ respectively:
\begin{equation*}
n=m_{\mv}+m_\z+1, \:\:\:\:\:\:\: Q = \frac{m_\mv}{2} + m_\z.
\end{equation*}

The group $S$ is equipped with the left-invariant Riemannian metric induced by
\begin{equation*}
\langle (X,Z,l), (X',Z',l') \rangle = \langle X, X' \rangle + \langle Z, Z' \rangle + ll'
\end{equation*}
on $\mathfrak s$. For $x \in S$, we denote by $s=d(e,x)$, that is, the geodesic distance of $x$ from the identity $e$. Then the left Haar measure $dx$ of the group $S$ may be normalized so that
\begin{equation*}
dx= A(s)\:ds\:d\sigma(\omega)\:,
\end{equation*}
where $A$ is the density function given by,
\begin{equation*}
A(s)= 2^{m_\mv + m_\z} \:{\left(\sinh (s/2)\right)}^{m_\mv + m_\z}\: {\left(\cosh (s/2)\right)}^{m_\z} \:,
\end{equation*}
and $d\sigma$ is the surface measure of the unit sphere. By elementary estimates of hyperbolic functions, we have the asymptotics:
\begin{equation} \label{density_function}
A(s) \asymp s^{n-1}\:,\:\:\:\:\:\: 0<s\le R < \infty\:.
\end{equation}

For a radial function $f$, we then have
\begin{equation*}
\int_S f(x)~dx=\int_{0}^\infty f(s)~A(s)~ds\:.
\end{equation*}

We now recall the spherical functions on Damek-Ricci spaces. The spherical functions $\varphi_\lambda$ on $S$, for $\lambda \in \C$ are the radial eigenfunctions of the Laplace-Beltrami operator $\Delta$, satisfying the following normalization criterion
\begin{equation*}
\begin{cases}
 & \Delta \varphi_\lambda = - \left(\lambda^2 + \frac{Q^2}{4}\right) \varphi_\lambda  \\
& \varphi_\lambda(e)=1 \:.
\end{cases}
\end{equation*}
For all $\lambda \in \R$ and $x \in S$, the spherical functions satisfy
\begin{equation*}
\varphi_\lambda(x)=\varphi_\lambda(s)= \varphi_{-\lambda}(s)\:.
\end{equation*}
It also satisfies for all $\lambda \in \R$ and all $s \ge 0$:
\begin{equation} \label{phi_lambda_bound}
\left|\varphi_\lambda(s)\right| \le 1\:.
\end{equation}

The spherical function is instrumental in defining the Spherical Fourier transform of a ``nice" radial function $f$ (on $S$):
\begin{equation*}
\widehat{f}(\lambda):= \int_S f(x) \varphi_\lambda(x) dx = \int_0^\infty f(s) \varphi_\lambda(s) A(s) ds\:.
\end{equation*}
The Harish-Chandra ${\bf c}$-function is defined as
\begin{equation*}
{\bf c}(\lambda)= \frac{2^{(Q-2i\lambda)} \Gamma(2i\lambda)}{\Gamma\left(\frac{Q+2i\lambda}{2}\right)} \frac{\Gamma\left(\frac{n}{2}\right)}{\Gamma\left(\frac{m_\mv + 4i\lambda+2}{4}\right)}\:,
\end{equation*}
for all $\lambda \in \R$. We will need the following pointwise estimates (see \cite[Lemma 4.8]{RS}):
\begin{equation} \label{plancherel_measure}
{|{\bf c}(\lambda)|}^{-2} \asymp \:{|\lambda|}^2 {\left(1+|\lambda|\right)}^{n-3}\:.
\end{equation}
For $j \in \N \cup \{0\}$, we also have the
derivative estimates (\cite[Lemma 4.2]{A}):
\begin{equation}\label{c-fn_derivative_estimates}
\left|\frac{d^j}{d \lambda^j}{|{\bf c}(\lambda)|}^{-2}\right| \lesssim_j {(1+|\lambda|)}^{n-1-j} \:, \:\: \lambda \in \R.
\end{equation}

One has the following inversion formula (when valid) for radial functions:
\begin{equation*}
f(x)= C \int_{0}^\infty \widehat{f}(\lambda)\varphi_\lambda(x) {|{\bf c}(\lambda)|}^{-2}
d\lambda\:,
\end{equation*}
where $C$ depends only on $m_\mv$ and $m_\z$. Moreover, the Spherical Fourier transform extends to an isometry from the space of radial $L^2$ functions on $S$ onto $L^2\left((0,\infty),C{|{\bf c}(\lambda)|}^{-2} d\lambda\right)$. 

Similar to the inhomogeneous Sobolev spaces (defined in (\ref{sobolev_space_defn})), one also has the notion of homogeneous Sobolev spaces. For $\beta \ge 0$, the homogeneous Sobolev spaces specialized for radial  functions (corresponding to the shifted Laplace-Beltrami operator $\tilde{\Delta})$ are defined as,
\begin{equation*}
\dot{H}^\beta(S):=\left\{f \in L^2(S): {\|f\|}_{\dot{H}^\beta(S)}:= {\left(\int_0^\infty \lambda^{2\beta}\: {|\widehat{f}(\lambda)|}^2 {|{\bf c}(\lambda)|}^{-2} d\lambda\right)}^{1/2}< \infty\right\}\:.
\end{equation*}
Clearly for any $\beta \ge 0$ and $f \in H^\beta(S)$,
\begin{equation} \label{sobolev_inclusion}
 {\|f\|}_{\dot{H}^\beta(S)} \le  {\|f\|}_{H^\beta(S)}\:\text{ and hence } H^\beta(S) \hookrightarrow \dot{H}^\beta(S)\:.
\end{equation}

The class of radial $L^2$-Schwartz class functions on $S$, denoted by $\mathscr{S}^2(S)_{o}$, is defined to be the collection of $f \in C^\infty(S)_{o}$ such that 
\begin{equation} \label{schwartz_defn}
\left|{\left(\frac{d}{ds}\right)}^M f(s)\right| \lesssim {(1+s)}^{-N} e^{-\frac{Q}{2}s}\:, \text{ for any } M, N \in \N \cup \{0\}\:,
\end{equation}
where $C^\infty(S)_{o}$ is the set of radial smooth functions on $S$ (see \cite[p. 652]{ADY}).

One important tool which relates the spherical Fourier transform on $S$ with the Fourier transform on $\R$ is the so called Abel transform (see \cite{cowl, RS}). Indeed,
we have the following commutative diagram, where every map is a topological isomorphism:
\[
\begin{tikzcd}[row sep=1.4cm,column sep=1.4cm]
\mathscr{S}^2(S)_{o}\arrow[r,"\mathscr{A}_{S,\R}"] \arrow[dr,swap,"\wedge"] & {\mathscr{S}(\R)}_{e}
\arrow[d,"\sim"] \\
&  {\mathscr{S}(\R)}_{e}&
\end{tikzcd}
\]

\begin{itemize}
\item $\mathscr{A}_{S,\R}$ is the Abel transform defined from $\mathscr{S}^2(S)_{o}$ to ${\mathscr{S}(\R)}_{e}$\:.
\item $\wedge$ denotes the Spherical Fourier transform from $\mathscr{S}^2(S)_{o}$ to ${\mathscr{S}(\R)}_{e}$\:.
\item $\sim$ denotes the 1-dimensional Euclidean Fourier transform from ${\mathscr{S}(\R)}_{e}$ to itself.
\end{itemize}

For more details regarding the Abel transform, we refer the reader to \cite[p. 652-653]{ADY}. These have been generalized to the setting of Ch\'ebli-Trim\`eche Hypergroups \cite{BX}, whose simplest case is that of radial functions on $\R^n$.

We now define the following normalizing constant in terms of the Gamma functions,
\begin{equation*}
c_0 = 2^{m_\z}\: \pi^{-1/2}\: \frac{\Gamma(n/2)}{\Gamma((n-1)/2)}\:.
\end{equation*}
For points near the identity, one has the following Bessel series expansion of $\varphi_\lambda$:
\begin{lemma}\cite[Theorem 3.1]{A} \label{bessel_series_expansion}
There exist $R_0, 2<R_0<2R_1$, such that for any $0 \le s \le R_0$, and any integer $M \ge 0$, and all $\lambda \in \R$, we have
\begin{equation*}
\varphi_\lambda(s)= c_0 {\left(\frac{s^{n-1}}{A(s)}\right)}^{1/2} \displaystyle\sum_{l=0}^M a_l(s)\J_{\frac{n-2}{2}+l}(\lambda s) s^{2l} + E_{M+1}(\lambda,s)\:,
\end{equation*}
where
\begin{equation*}
a_0 \equiv 1\:,\: |a_l(s)| \le C {(4R_1)}^{-l}\:,
\end{equation*}
and the error term has the following behaviour
\begin{equation*}
	\left|E_{M+1}(\lambda,s) \right| \le C_M \begin{cases}
	 s^{2(M+1)}  & \text{ if  }\: |\lambda s| \le 1 \\
	s^{2(M+1)} {|\lambda s|}^{-\left(\frac{n-1}{2} + M +1\right)} &\text{ if  }\: |\lambda s| > 1 \:.
	\end{cases}
\end{equation*}
Moreover, for every $0 \le s <2$, the series
\begin{equation*}
\varphi_\lambda(s)= c_0 {\left(\frac{s^{n-1}}{A(s)}\right)}^{1/2} \displaystyle\sum_{l=0}^\infty a_l(s)\J_{\frac{n-2}{2}+l}(\lambda s) s^{2l}\:,
\end{equation*}
is absolutely convergent.
\end{lemma}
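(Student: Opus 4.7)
This statement is \cite[Theorem 3.1]{A}, proved via the Stanton--Tomas method (originally developed for rank-one symmetric spaces and adapted by Anker to Damek--Ricci spaces). The plan is to reduce the radial eigenvalue equation to a perturbed Bessel equation and then to solve it by a Frobenius-type ansatz in the Bessel functions $\J_{(n-2)/2+l}(\lambda s)$, with the key point being that the perturbation coefficient is real-analytic with an explicitly controlled radius of convergence.

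First I would conjugate away the singular first-order part of the radial Laplacian. The radial part of $\Delta$ is $\partial_s^2 + (A'/A)\,\partial_s$, so the substitution $\varphi_\lambda(s)= c_0\bigl(s^{n-1}/A(s)\bigr)^{1/2}\phi(\lambda,s)$ transforms $\Delta\varphi_\lambda = -(\lambda^2+Q^2/4)\varphi_\lambda$ into a perturbed Bessel equation
\begin{equation*}
\phi''(\lambda,s) + \frac{n-1}{s}\,\phi'(\lambda,s) + \lambda^2\,\phi(\lambda,s) = V(s)\,\phi(\lambda,s),
\end{equation*}
where $V$ is an even function, real-analytic on a disc of radius $2R_1$ about the origin. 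The radius $R_1$ is dictated by the nearest pole of the ``effective potential'' coming from $(s^{n-1}/A(s))^{1/2}$, which by the explicit formula $A(s)=2^{m_\mv+m_\z}(\sinh(s/2))^{m_\mv+m_\z}(\cosh(s/2))^{m_\z}$ is a concrete constant.

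Next, with $\nu=(n-2)/2$, I would seek a formal solution of the form $\phi(\lambda,s)=\sum_{l\ge 0} a_l(s)\,\J_{\nu+l}(\lambda s)\,s^{2l}$. Using the free Bessel identity that makes each building block $\J_{\nu+l}(\lambda s)\,s^{2l}$ solve the $(\nu+l)$-indexed free equation (up to the natural recursion terms), substitution and matching Bessel components yields a Volterra-type recursion expressing $a_l$ as an explicit integral operator applied to $V\cdot a_{l-1}$, with $a_0\equiv 1$. The combinatorial factor of order $1/(4l(\nu+l))$ in front of this integral operator, combined with the uniform bound on $V$ over $[0,2R_1]$, then allows an induction that produces the geometric decay $|a_l(s)|\le C(4R_1)^{-l}$ for $0\le s\le R_0$ with any fixed $R_0\in(2,2R_1)$. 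Absolute convergence of the full series on $[0,2)$ is then immediate from the standard bound $|\J_\mu(\lambda s)|\le C_\mu$ for $|\lambda s|\le 1$.

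Finally, the remainder $E_{M+1}(\lambda,s)$ is the tail $\sum_{l>M}$ of the series; termwise estimation using $|\J_\mu(z)|\le C_\mu$ for $|z|\le 1$ and $|\J_\mu(z)|\le C_\mu |z|^{-\mu-1/2}$ for $|z|>1$ produces the two regimes in the stated error bound, together with the prefactor $s^{2(M+1)}$ coming from the leading surviving term $l=M+1$. The main obstacle is tracking the exact decay constant $4R_1$ through the Volterra recursion: the iteration must be set up so that the gain per step is genuinely geometric (controlled by the analyticity radius of $V$) rather than merely factorial. Once this bookkeeping is in place, everything else reduces to standard Bessel asymptotics.
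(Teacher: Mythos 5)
This lemma is not proved in the paper at all: it is quoted verbatim from Astengo \cite[Theorem 3.1]{A}, which adapts the Stanton--Tomas expansion of spherical functions to Damek--Ricci spaces, so there is no in-paper argument to compare against. Your sketch does follow the broad strategy of that source: conjugating the radial eigenvalue equation by $c_0\left(s^{n-1}/A(s)\right)^{1/2}$ to get a perturbed Bessel equation, making the ansatz $\sum_l a_l(s)\,\J_{\frac{n-2}{2}+l}(\lambda s)\,s^{2l}$, and deriving the coefficient bounds $|a_l(s)|\le C(4R_1)^{-l}$ by induction from a Volterra-type recursion governed by the analyticity of the potential. That part of the outline is consistent with the known proof, and the absolute convergence for $s<2$ does follow from the uniform bound $|\J_\mu(z)|\lesssim 1$ (valid for all real $z$, not only $|z|\le 1$, by the Poisson integral representation) together with $s^2<4R_1$.

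There is, however, a genuine gap in your treatment of the error term. You take $E_{M+1}$ to be the tail $\sum_{l>M}$ and estimate it termwise, but (i) the expansion with remainder is asserted for all $0\le s\le R_0$ with $R_0>2$, while the series is only known to converge for $s<2$, so on $(2,R_0]$ the remainder is not a tail of a convergent series at all; and (ii) even for $s<2$, the decay bound $|\J_\mu(z)|\le C_\mu|z|^{-\mu-1/2}$ carries a constant of size roughly $2^\mu\Gamma(\mu+\tfrac12)$ coming from the normalization of $\J_\mu$, which grows super-geometrically in $\mu=\tfrac{n-2}{2}+l$; summing over $l>M$ against the merely geometric gain $(4R_1)^{-l}s^{2l}|\lambda s|^{-l}$ therefore diverges when $|\lambda s|$ is only moderately larger than $1$, and cannot yield the stated bound $C_M\,s^{2(M+1)}|\lambda s|^{-(\frac{n-1}{2}+M+1)}$ uniformly in the regime $|\lambda s|>1$. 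In Stanton--Tomas and Astengo the remainder is instead controlled directly through the recursive structure -- a variation-of-parameters/Volterra integral representation for the remainder after $M+1$ steps, estimated with fixed-order Bessel kernels -- and this is where the real work of the error estimate lies; your sketch replaces it with a step that does not close.
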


For the asymptotic behaviour of the spherical functions away from the identity, we look at the following series expansion \cite[pp. 735-736]{APV}:
\begin{equation} \label{anker_series_expansion}
\varphi_\lambda(s)= 2^{-m_\z/2} {A(s)}^{-1/2} \left\{{\bf c}(\lambda)  \displaystyle\sum_{\mu=0}^\infty \Gamma_\mu(\lambda) e^{(i \lambda-\mu) s} + {\bf c}(-\lambda) \displaystyle\sum_{\mu=0}^\infty \Gamma_\mu(-\lambda) e^{-(i\lambda + \mu) s}\right\}\:.
\end{equation}
The above series converges for $\lambda \in \R \setminus \{0\}$, uniformly on compacts not containing the group identity, where $\Gamma_0 \equiv 1$ and for $\mu \in \N$, one has the recursion formula,
\begin{equation*}
(\mu^2-2i\mu\lambda) \Gamma_\mu(\lambda) = \displaystyle\sum_{j=0}^{\mu -1}\omega_{\mu -j}\Gamma_{j}(\lambda)\:.
\end{equation*}
Then one has the following estimate on the coefficients \cite[Lemma 1]{APV}, for constants $C>0, d \ge 0$:
\begin{equation} \label{coefficient_estimate}
\left|\Gamma_\mu(\lambda)\right| \le C \mu^d {\left(1+|\lambda|\right)}^{-1}\:,
\end{equation}
for all $\lambda \in \R \setminus \{0\}, \mu \in \N$. 

The relevant preliminaries on series expansions of $\varphi_\lambda$, for the degenerate case of the Real hyperbolic spaces  can be found in \cite{ST, AP}. 

\subsection{An oscillatory integral estimate:} In this subsection, we state an oscillatory integral estimate which is the heart of the matter. For $a \in (0,1)$ and $\beta \in \left(\frac{a}{4},\min\left\{\frac{a}{2},\frac{1}{4}\right\}\right)$, we set
\begin{equation} \label{Walther_constants}
c_1=1-2\beta\:\:\:\text{ and } c_2= \frac{4\beta-2+a}{2a-2}\:.
\end{equation}
It is then easy to note that
\begin{equation} \label{Walther_constants_relations}
0<c_1 < c_2<1\:.
\end{equation}
We now present the oscillatory integral estimate:
\begin{lemma}\cite[Lemma 4.6, p. 196]{Wa2} \label{oscillatory_integral_estimate}
Let $a \in (0,1)\:,\:\beta \in \left(\frac{a}{4},\min\left\{\frac{a}{2},\frac{1}{4}\right\}\right)$ and an even $\chi \in C^\infty_c(\R)$ be $[0,1]$-valued such that 
\begin{eqnarray*}
\chi(\lambda)&=& 1,\:\:\:\:|\lambda| < 1,\\
&=&0,\:\:\:\:|\lambda| \ge 2\:.
\end{eqnarray*}
Then for any $A>0$, there is a positive number $C_A$ independent of $\varepsilon \in [-A,A]$, $N \in \N$ and $x \in \R$ such that
\begin{equation*}
\left|\int_{\R} e^{i\left(x \xi + \varepsilon |\xi|^a\right)}\:|\xi|^{-2\beta}\:\chi\left(\frac{\xi}{N}\right)\:d\xi\right| \le C_A \left(|x|^{-c_1}+ |x|^{-c_2}\right)\:.
\end{equation*}  
\end{lemma}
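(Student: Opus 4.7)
By evenness of $\chi$ I reduce to $\xi>0$, after which the phase becomes $\phi(\xi)=x\xi+\varepsilon\xi^a$ with $\phi'(\xi)=x+\varepsilon a\xi^{a-1}$ and $\phi''(\xi)=\varepsilon a(a-1)\xi^{a-2}$. A critical point in $(0,\infty)$ exists precisely when $x\varepsilon<0$, located at $\xi_c:=(a|\varepsilon|/|x|)^{1/(1-a)}$ and satisfying $|\phi''(\xi_c)|\asymp |x|\xi_c^{-1}$. The plan is to split $(0,2N)$ into a near-stationary window $\xi\asymp\xi_c$ (if $\xi_c\in (0,2N)$; otherwise absent) and two off-critical regions, and to estimate each contribution separately.

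In the two off-critical regions $\xi>2\xi_c$ and $\xi<\xi_c/2$ one has respectively $|\phi'(\xi)|\asymp |x|$ and $|\phi'(\xi)|\asymp |\varepsilon|\xi^{a-1}$, so I iterate the integration-by-parts identity $g\mapsto -\partial_\xi(g/(i\phi'))$ applied to $g(\xi)=\xi^{-2\beta}\chi(\xi/N)$. Each iteration in the far region gains $|x|^{-1}$ at the cost of a $\xi^{-1}$ factor; since $2\beta\in(0,1)$ the resulting $\xi$-integrals converge, boundary terms at $\xi=2N$ vanish by compact support of $\chi$, and the remaining boundary contribution at $\xi=2\xi_c$ combines to yield $\lesssim |x|^{-c_1}$. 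In the near region one combines a direct pointwise estimate on a small initial interval (using $\int_0^\delta \xi^{-2\beta}\,d\xi\lesssim \delta^{1-2\beta}$, valid since $2\beta<1$) with integration by parts on the remainder, yielding a contribution controlled by $|x|^{-c_1}+|x|^{-c_2}$.

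For the near-stationary window $\xi\in[\xi_c/2,2\xi_c]$, the van der Corput lemma (with $|\phi''|\asymp |x|/\xi_c$ and amplitude $\xi^{-2\beta}\chi(\xi/N)$ of bounded variation $\lesssim \xi_c^{-2\beta}$) yields
\begin{equation*}
\left|\int_{\xi\asymp\xi_c}e^{i\phi(\xi)}\xi^{-2\beta}\chi(\xi/N)\,d\xi\right|\lesssim |\phi''(\xi_c)|^{-1/2}\xi_c^{-2\beta}\asymp |x|^{-1/2}\xi_c^{1/2-2\beta}.
\end{equation*}
Substituting $\xi_c\asymp(|\varepsilon|/|x|)^{1/(1-a)}$ with $|\varepsilon|\le A$ (using $1/2-2\beta>0$ to keep the $|\varepsilon|$-factor bounded) and simplifying via the algebraic identity $\tfrac{1}{2}+\tfrac{1/2-2\beta}{1-a}=\tfrac{1-a/2-2\beta}{1-a}=c_2$ from \eqref{Walther_constants}, one obtains the bound $|x|^{-c_2}$ for this window.

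The main obstacle is uniformity in $\varepsilon\in[-A,A]$, $N\in\mathbb{N}$ and $x$, particularly through the degenerations of $\xi_c$: when $\xi_c>2N$ the stationary contribution is truncated away and only the non-stationary estimates survive; when $\xi_c\to 0^+$ the stationary-phase window collides with the singularity of $\xi^{-2\beta}$ at the origin. In each configuration one verifies directly that the surviving pieces reproduce the stated bound. When $|x|\lesssim 1$ the target bound is vacuous, and one separately checks the uniform $\mathcal{O}(1)$-boundedness of the full integral, via the integrability of $\xi^{-2\beta}$ near $0$ together with one integration by parts at infinity. The hypothesis $\beta\in(a/4,\min\{a/2,1/4\})$ ensures $0<c_1<c_2<1$ by \eqref{Walther_constants_relations}, so both exponents are genuinely positive and the claimed bound is non-trivial.
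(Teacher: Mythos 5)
The paper itself does not prove this lemma: it is imported verbatim from Walther \cite{Wa2}, and the only in-paper content is the remark that the case $\varepsilon\in[-A,0]$ also holds (which one can see by conjugation, since $|I(x,\varepsilon)|=|I(-x,-\varepsilon)|$ and the right-hand side depends only on $|x|$). So your argument cannot be matched against an internal proof; it has to stand on its own as a re-derivation of Walther's estimate. Its skeleton is the natural one, and the decisive arithmetic is right: on the window $\xi\asymp\xi_c=(a|\varepsilon|/|x|)^{1/(1-a)}$, van der Corput with $|\phi''|\asymp|x|/\xi_c$ gives $|x|^{-1/2}\xi_c^{1/2-2\beta}=|\varepsilon|^{(1/2-2\beta)/(1-a)}|x|^{-c_2}\le C_A|x|^{-c_2}$ (here $1/2-2\beta>0$ is indeed what makes the constant depend only on $A$), and the near-origin trivial estimate produces $|x|^{-c_1}$.

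Two points, however, keep this from being a proof. First, your disposal of $|x|\lesssim1$ is based on a false claim: the full integral is \emph{not} uniformly $\mathcal{O}(1)$ there. Take $\varepsilon=0$ and $N\ge 1/|x|$; then the integral is comparable to $|x|^{2\beta-1}=|x|^{-c_1}$, which is unbounded as $x\to0$ (and the target bound is finite, not vacuous, for $0<|x|\le1$). That regime must instead be covered by the same decomposition, which in fact nowhere requires $|x|\gtrsim1$. Second, the off-critical estimates as you state them are not yet uniform: integration by parts with cut at $\xi\asymp\xi_c$ produces boundary and derivative terms of size $|x|^{-1}\xi_c^{-2\beta}$ (and, in the low region, terms carrying $|\varepsilon|^{-1}$), and these are $\lesssim|x|^{-c_1}$ only in the regime $\xi_c\gtrsim 1/|x|$, i.e.\ $a|\varepsilon|\gtrsim|x|^{a}$; when $\xi_c\lesssim1/|x|$ one must instead absorb the whole window $[\xi_c/2,2\xi_c]$ and the low region into the trivial estimate on $[0,C/|x|]$ and integrate by parts only on $[C/|x|,2N]$, where $|\phi'|\gtrsim|x|$. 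This case analysis—comparing $\xi_c$ with $1/|x|$ and with $N$, plus the sign case $x\varepsilon>0$ with no critical point—is precisely where the uniformity in $\varepsilon$, $N$, $x$ is earned, and your write-up defers it (``one verifies directly'') rather than carrying it out. The route is sound and completable, but as written it is a sketch containing one incorrect step and with the key bookkeeping asserted rather than proved.
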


We remark that Lemma $4.6$ in \cite{Wa2} is stated for $\varepsilon \in [0,A]$, but its proof works for $\varepsilon \in [-A,0]$ as well.

\section{Some auxiliary lemmata}
In this section, we obtain some auxiliary results that are required in the proof of our main results and may be of independent interest.  
\subsection{A Schwartz correspondence for high frequency:}
In this subsection, we obtain a one-one correspondence between radial Schwartz class functions on $S$ and $\R^n$ whose Spherical Fourier transforms are supported away from the origin.

We start off by defining 
\begin{equation*}
\mathscr{S}(\R)^\infty_{e}:=\{g\in \mathscr{S}(\R)_{e}\mid 0\notin Supp({g})\}.
\end{equation*}
So, $\mathscr{S}(\R)^\infty_{e}$ is the collection of all even Schwartz class functions on $\R$ which are supported outside an interval containing $0$. Then we look at its images under the Euclidean and Spherical Fourier inversions:
\begin{equation} \label{new_schwartz_spaces}
\mathscr{S}(\R^n)^\infty_{o} := \mathscr{F}^{-1}\left(\mathscr{S}(\R)^\infty_{e}\right)\:, \:\:\text{ and } \mathscr{S}^2(S)^\infty_{o} := \wedge^{-1}\left(\mathscr{S}(\R)^\infty_{e}\right)\:. 
\end{equation}
We now present the following one-one correspondence:
\begin{lemma} \label{schwartz_correspondence}
For each $f \in \mathscr{S}^2(S)^\infty_{o}$, there exists a unique $g \in \mathscr{S}(\R^n)^\infty_{o}$ (and conversely) such that 
\begin{equation*}
Supp(\mathscr{F}g)=Supp(\widehat{f})\:,
\end{equation*}
with
\begin{equation*}
\lambda^{n-1} \mathscr{F}g(\lambda) = {|{\bf c}(\lambda)|}^{-2} \widehat{f}(\lambda)\:,\:\: \lambda \in (0,\infty)\:.
\end{equation*}
\end{lemma}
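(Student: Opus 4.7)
The plan is to exploit the Abel diagram for $S$ recalled earlier, together with its $\mathbb{R}^n$ analog provided by the Ch\'ebli-Trim\`eche theory \cite{BX}, which respectively identify $\mathscr{S}^2(S)^\infty_{o}$ and $\mathscr{S}(\R^n)^\infty_{o}$ with the common model $\mathscr{S}(\R)^\infty_{e}$ via the maps $\wedge$ and $\mathscr{F}$. Once both Schwartz classes are transported to this model, the claim reduces to showing that multiplication by the even multiplier
\begin{equation*}
\Theta(\lambda) \, := \, |\lambda|^{-(n-1)}\,|{\bf c}(\lambda)|^{-2}
\end{equation*}
(and by its reciprocal) preserves $\mathscr{S}(\R)^\infty_{e}$. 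The target $g$ is then defined by $g := \mathscr{F}^{-1}(\Theta \cdot \widehat{f})$, and the identity in the statement holds tautologically on $(0,\infty)$, where $|\lambda|^{n-1}=\lambda^{n-1}$.

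For the forward direction, $f \in \mathscr{S}^2(S)^\infty_{o}$ means $\widehat{f} \in \mathscr{S}(\R)^\infty_{e}$, so there exists $\delta>0$ with $Supp(\widehat{f})\subset \R\setminus(-\delta,\delta)$. On $\{|\lambda|\ge \delta\}$ the factor $|\lambda|^{-(n-1)}$ is smooth with polynomially bounded derivatives, while (\ref{c-fn_derivative_estimates}) supplies $\bigl|(d/d\lambda)^j|{\bf c}(\lambda)|^{-2}\bigr|\lesssim (1+|\lambda|)^{n-1-j}$. The Leibniz rule then shows every derivative of $\Theta$ is polynomially bounded on $Supp(\widehat{f})$, so $\Theta\cdot\widehat{f}$ retains both rapid decay and smoothness, lies in $\mathscr{S}(\R)^\infty_{e}$, and has exactly the same support as $\widehat{f}$ (because $\Theta$ is strictly positive on $\R\setminus\{0\}$). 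Setting $g := \mathscr{F}^{-1}(\Theta\cdot\widehat{f})$ places $g$ in $\mathscr{S}(\R^n)^\infty_{o}$ by the very definition (\ref{new_schwartz_spaces}) of that space, and the prescribed identity is then immediate.

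The converse direction is structurally symmetric: starting from $g \in \mathscr{S}(\R^n)^\infty_{o}$ one applies the reciprocal multiplier $\lambda \mapsto |\lambda|^{n-1}|{\bf c}(\lambda)|^{2}$ to $\mathscr{F}g \in \mathscr{S}(\R)^\infty_{e}$ and sets $f$ to be $\wedge^{-1}$ of the result. I expect the main technical nuisance to lie precisely here, since the paper records derivative estimates for $|{\bf c}(\lambda)|^{-2}$ but not for its reciprocal. The remedy is to apply Fa\`a di Bruno to $1/u$ with $u(\lambda):=|{\bf c}(\lambda)|^{-2}$, combining the pointwise lower bound $u(\lambda)\gtrsim |\lambda|^2(1+|\lambda|)^{n-3}\gtrsim \delta^2$ on $|\lambda|\ge \delta$ from (\ref{plancherel_measure}) with the derivative bounds (\ref{c-fn_derivative_estimates}) on $u$; this yields polynomial bounds on every derivative of $|{\bf c}(\lambda)|^{2}$ on $Supp(\mathscr{F}g)$. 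The resulting $f$ then lies in $\mathscr{S}^2(S)^\infty_{o}$, and uniqueness in both directions is immediate from the injectivity of the spherical (respectively radial Euclidean) Fourier transform on these Schwartz classes.
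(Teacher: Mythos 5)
Your proposal is correct and follows essentially the same route as the paper: both identify $\mathscr{S}^2(S)^\infty_{o}$ and $\mathscr{S}(\R^n)^\infty_{o}$ with $\mathscr{S}(\R)^\infty_{e}$ via the Abel-transform diagrams and then realize $g$ by applying the multiplier $|{\bf c}(\lambda)|^{-2}/\lambda^{n-1}$ on the Euclidean Fourier side. If anything, you are more careful than the paper on the one point it leaves implicit, namely that the \emph{reciprocal} multiplier also preserves $\mathscr{S}(\R)^\infty_{e}$, which your Fa\`a di Bruno argument combined with the lower bound from (\ref{plancherel_measure}) handles correctly.
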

\begin{proof}
By the properties of the Abel transform described before, we have the following commutative diagram,
\[
\begin{tikzcd}[row sep=1.4cm,column sep=1.4cm]
\mathscr{S}^2(S)_{o}\arrow[r,"\mathscr{A}_{S,\R}"] \arrow[dr,swap,"\wedge"] & {\mathscr{S}(\R)}_{e}
\arrow[d,"\sim"] &
\arrow[l,"\mathscr{A}_{\R^n,\R}",swap]  \mathscr{S}(\R^n)_{o} \arrow[dl,"\mathscr{F}"] \\
&  {\mathscr{S}(\R)}_{e}&
\end{tikzcd}
\]
where $\mathscr{A}_{\R^n,\R}$ is the Abel transform defined from $\mathscr{S}(\R^n)_{o}$ to ${\mathscr{S}(\R)}_{e}$. As all the maps above are bijections, by defining
\begin{equation*}
\mathscr{A}:=\mathscr{A}^{-1}_{\R^n,\R} \circ \mathscr{A}_{S,\R},
\end{equation*}
we reduce matters to the following simplified commutative diagram:

\[
\begin{tikzcd}[row sep=1.4cm,column sep=1.4cm]
\mathscr{S}^2(S)_{o}\arrow[r,"\mathscr{A}"] \arrow[dr,swap,"\wedge"] & \mathscr{S}(\R^n)_{o}
\arrow[d,"\mathscr{F}"]   \\
&  {\mathscr{S}(\R)}_{e}&
\end{tikzcd}
\]

Then by the definition of the spaces $\mathscr{S}(\R)^\infty_{e},\:\mathscr{S}(\R^n)^\infty_{o}$ and $\mathscr{S}^2(S)^\infty_{o}$, the above commutative diagram descends to the following, where every arrow is again a bijection:

\[
\begin{tikzcd}[row sep=1.4cm,column sep=1.4cm]
\mathscr{S}^2(S)^\infty_{o}\arrow[r,"\mathscr{A}"] \arrow[dr,swap,"\wedge"] & \mathscr{S}(\R^n)^\infty_{o}
\arrow[d,"\mathscr{F}"]   \\
&  {\mathscr{S}(\R)}^\infty_{e}&
\end{tikzcd}
\]

Consequently, for $f \in \mathscr{S}^2(S)^\infty_{o}$, there exists a unique $\mathscr{A}f \in \mathscr{S}(\R^n)^\infty_{o}$ (and conversely) such that
\begin{equation} \label{ball_pf_eq3}
\widehat{f}(\lambda)= \mathscr{F}(\mathscr{A}f)(\lambda)\:.
\end{equation}
Next let us define a map $\mathfrak{m}: \mathscr{S}(\R)^\infty_{e}\to \mathscr{S}(\R)^\infty_{e}$, by the formula
\begin{equation*}
\mathfrak{m}(\kappa)(\lambda):= \frac{{|{\bf c}(\lambda)|}^{-2}}{\lambda^{n-1}} \kappa(\lambda).
\end{equation*}
Because of the derivative estimates of ${|{\bf c}(\lambda)|}^{-2}$ given by (\ref{c-fn_derivative_estimates}), the map $\mathfrak m$ is well defined and is a bijection with the inverse given by,
\begin{equation*}
\mathfrak{m}^{-1}(\kappa)(\lambda):= \frac{\lambda^{n-1}}{{|{\bf c}(\lambda)|}^{-2}} \kappa(\lambda).
\end{equation*}
As the Euclidean Fourier transform $\mathscr{F}$ is a bijection between $\mathscr{S}(\R^n)^\infty_{o}$ and $\mathscr{S}(\R)^\infty_{e}$, we get an induced map $\mathcal{M}$ obtained by conjugating $\mathfrak{m}$ with $\mathscr{F}$, which is a bijection from $\mathscr{S}(\R^n)^\infty_{o}$ onto itself:

\[
\begin{tikzcd}[row sep=1.4cm,column sep=1.4cm]
\mathscr{S}(\R^n)^\infty_{o} \arrow[r,"\mathcal{M}"] \arrow[d,swap,"\mathscr{F}"] & \mathscr{S}(\R^n)^\infty_{o}
\arrow[d,"\mathscr{F}"]   \\
{\mathscr{S}(\R)}^\infty_{e}  \arrow[r,"\mathfrak{m}"]  &  {\mathscr{S}(\R)}^\infty_{e}&
\end{tikzcd}
\]

Thus by (\ref{ball_pf_eq3}), we get the unique choice $\mathcal{M}(\mathscr{A}f) \in \mathscr{S}(\R^n)^\infty_{o}$ such that
\begin{equation} \label{ball_pf_eq4}
\mathscr{F}(\mathcal{M}(\mathscr{A}f))(\lambda)= \frac{{|{\bf c}(\lambda)|}^{-2}}{\lambda^{n-1}}  \mathscr{F}(\mathscr{A}f)(\lambda)= \frac{{|{\bf c}(\lambda)|}^{-2}}{\lambda^{n-1}} \widehat{f}(\lambda)\:.
\end{equation}
We are thus through by defining $g:= \mathcal{M}(\mathscr{A}f)$\:.
\end{proof}
\subsection{A transference principle for dispersive equations:} In this subsection, we observe that from the viewpoint of obtaining $L^2_{loc}$-maximal estimates, the fractional Schr\"odinger equation of degree $a \in (0,1)$, corresponding to $\tilde{\Delta}$  is prototypical of any asymptotically concave dispersive equation of degree $a \in (0,1)$. This follows from an abstract local transference principle, obtained by the author in \cite{Dewan2}\:. For the sake of completeness, we briefly mention it here.

\begin{definition} \cite[Definition $1.4$]{Dewan2}\label{my_defn}
Let us consider two dispersive equations of the form (\ref{dispersive}) corresponding to $\Psi_1,\Psi_2$ (recall that $\psi_1,\psi_2$ are the phase functions of the corresponding multipliers).
\begin{itemize}
\item[(i)] They are called $(\psi_1,\psi_2)$-{\it locally transferrable} if given that for all $R>0$, some $\beta_0>0$ and some $p \in [1,\infty]$, the maximal estimate 
\begin{equation*}
{\|S^*_{\psi_1} f\|}_{L^p(B_R)} \lesssim {\|f\|}_{H^\beta(S)}\:,
\end{equation*}
holds for all $\beta >\beta_0$ and all $f \in \mathscr{S}^2(S)_o$, it follows that the maximal estimate 
\begin{equation*}
{\|S^*_{\psi_2} f\|}_{L^p(B_R)} \lesssim {\|f\|}_{H^\beta(S)}\:,
\end{equation*}
also holds for all $R>0$, all $\beta >\beta_0$ and all $f \in \mathscr{S}^2(S)_o$. 

\item[(ii)] If the two equations are both $(\psi_1,\psi_2)$-locally transferrable as well as $(\psi_2,\psi_1)$-locally transferrable, then they are simply called {\it locally transferrable}.

\item[(iii)] If there exist $\Lambda>0$ and $C>0$, such that the phase functions $\psi_1$ and $\psi_2$ satisfy
\begin{equation*}
\left|\psi_1(\lambda)-\psi_2(\lambda)\right| \le C\:, \text{ for all } \lambda > \Lambda\:,
\end{equation*} 
then the equations are said to be {\it of comparable oscillation}. 
\end{itemize}
\end{definition}

\begin{remark} \label{concavity_transference}
Point $(iii)$ of Definition \ref{my_defn} means that in high frequency, both the phase functions are within bounded error. Then by the condition (\ref{concavity}) in Definition \ref{asymp_concavity} and (\ref{example_phase}) in Remark \ref{example}, it follows that any asymptotically concave dispersive equation of degree $a \in (0,1)$ is {\it of comparable oscillation} to  the fractional Schr\"odinger equation of degree $a \in (0,1)$, corresponding to $\tilde{\Delta}$\:. 
\end{remark}

The notion of {\it comparable oscillation} defines an equivalence relation. The following result shows that each member in the same equivalence class satisfies the same local maximal estimates:  

\begin{lemma} \cite[Theorem $1.6$]{Dewan2} \label{transference_principle}
Let $\psi_1$ and $\psi_2$ be continuous real-valued functions on $[0,\infty)$ such that they are $C^\infty$ away from the origin. If the dispersive equations corresponding to $\psi_1$ and $\psi_2$ are of comparable oscillation, then they are also locally transferrable.  
\end{lemma}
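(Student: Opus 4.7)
The plan is to exploit the factorization $e^{it\psi_2(\lambda)} = e^{it\psi_1(\lambda)}\,e^{it h(\lambda)}$, where $h:=\psi_2-\psi_1$ is uniformly bounded on $[\Lambda,\infty)$, by absorbing the bounded oscillatory factor into the initial data via an absolutely convergent power series. First I perform a smooth high/low-frequency split of $f\in\mathscr{S}^2(S)_{o}$: fix an even $\phi\in C_c^\infty(\R)$ with $\phi\equiv 1$ on $[-\Lambda,\Lambda]$ and supported in $[-2\Lambda,2\Lambda]$, and define $f_0,f_\infty\in\mathscr{S}^2(S)_{o}$ by $\widehat{f_0}=\phi\,\widehat{f}$ and $\widehat{f_\infty}=(1-\phi)\widehat{f}$. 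The low-frequency piece is immediate from $|\varphi_\lambda|\le 1$ (see (\ref{phi_lambda_bound})), Cauchy--Schwarz on $[0,2\Lambda]$, and local integrability of ${|{\bf c}(\lambda)|}^{-2}$:
\begin{equation*}
\|S_{\psi_2}^* f_0\|_{L^p(B_R)} \;\lesssim_{R,\Lambda}\; \|f_0\|_{L^2(S)} \;\le\; \|f\|_{H^\beta(S)},
\end{equation*}
with no recourse to the hypothesis on $\psi_1$.

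For the high-frequency piece, set $C_0:=\sup_{\lambda>\Lambda}|h(\lambda)|<\infty$. Since $|t\,h(\lambda)|\le C_0$ for every $(t,\lambda)\in(0,1)\times[\Lambda,\infty)$, the expansion
\begin{equation*}
e^{it h(\lambda)} \;=\; \sum_{k=0}^\infty \frac{(it)^k}{k!}\,h(\lambda)^k
\end{equation*}
converges uniformly on this set with $k$-th term dominated by $C_0^k/k!$. Inserting it into the spherical representation (\ref{dispersive_soln}) of $S_{\psi_2,t}f_\infty$ and interchanging sum and integral (justified by the Schwartz decay of $\widehat{f_\infty}$ against the polynomial growth of ${|{\bf c}|}^{-2}$) yields
\begin{equation*}
S_{\psi_2,t} f_\infty(x) \;=\; \sum_{k=0}^\infty \frac{(it)^k}{k!}\, S_{\psi_1,t}\,g_k(x), \qquad \widehat{g_k}(\lambda) := h(\lambda)^k\,\widehat{f_\infty}(\lambda).
\end{equation*}
Because $|\widehat{g_k}|\le C_0^k|\widehat{f_\infty}|$ pointwise, $\|g_k\|_{H^\beta(S)}\le C_0^k\|f\|_{H^\beta(S)}$. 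Taking $\sup_{t\in(0,1)}$ inside (noting $|t|^k\le 1$) and applying Minkowski's inequality followed by the assumed maximal estimate for $\psi_1$ term by term gives
\begin{equation*}
\|S_{\psi_2}^* f_\infty\|_{L^p(B_R)} \;\le\; \sum_{k=0}^\infty \frac{1}{k!}\,\|S_{\psi_1}^* g_k\|_{L^p(B_R)} \;\lesssim\; \sum_{k=0}^\infty \frac{C_0^k}{k!}\,\|f\|_{H^\beta(S)} \;=\; e^{C_0}\,\|f\|_{H^\beta(S)}.
\end{equation*}
Combining with the low-frequency bound establishes $(\psi_1,\psi_2)$-local transferability; interchanging the roles of $\psi_1$ and $\psi_2$ completes the proof.

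The main technical obstacle is that the hypothesis supplies the $\psi_1$ maximal estimate only for $\mathscr{S}^2(S)_{o}$-inputs, while the $g_k$'s need not be Schwartz: although Definition \ref{my_defn}(iii) together with the smoothness hypothesis gives $h\in C^\infty([\Lambda,\infty))$ and bounds $h$ itself, it places no constraint on its derivatives, so $h^k\widehat{f_\infty}$ may fail to lie in $\mathscr{S}(\R)_{e}$. I would bridge this gap by a standard density/sublinearity argument: approximate $g_k$ in $H^\beta(S)$ by $g_{k,m}\in\mathscr{S}^2(S)_{o}$ (for example via mollification of $\widehat{g_k}$ together with a smooth compactly supported Fourier cutoff), apply the Schwartz-level $\psi_1$ maximal estimate to $g_{k,m}-g_{k,m'}$, and use the sublinearity $|S^*_{\psi_1} g_{k,m}-S^*_{\psi_1} g_{k,m'}|\le S^*_{\psi_1}(g_{k,m}-g_{k,m'})$ to conclude that $\{S^*_{\psi_1} g_{k,m}\}$ is Cauchy in $L^p(B_R)$. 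Its $L^p$-limit dominates the pointwise-defined $S^*_{\psi_1} g_k$ (which is finite for each $x$ by $|\varphi_\lambda|\le 1$ and the rapid decay of $\widehat{g_k}$) and inherits the bound $\lesssim\|g_k\|_{H^\beta(S)}$, legitimizing the term-by-term application above.
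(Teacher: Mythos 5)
The paper does not actually prove this lemma here: it is quoted verbatim from \cite[Theorem 1.6]{Dewan2}, so there is no in-paper proof to compare against. Judged on its own merits, your argument is essentially correct and follows the natural (and, as far as one can tell, the expected) route: split $f$ into low and high frequencies, dispose of the low-frequency piece trivially via $|\varphi_\lambda|\le 1$, Cauchy--Schwarz and local integrability of ${|{\bf c}(\lambda)|}^{-2}$, and absorb the bounded phase discrepancy $h=\psi_2-\psi_1$ on the high-frequency piece through the expansion $e^{ith}=\sum_k (it)^k h^k/k!$, which converts $S_{\psi_2,t}f_\infty$ into a series of $S_{\psi_1,t}g_k$ with $\|g_k\|_{H^\beta(S)}\le C_0^k\|f\|_{H^\beta(S)}$ and the factor $1/k!$ beating $C_0^k$. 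You also correctly identify the only genuinely delicate point: the hypothesis is stated for $\mathscr{S}^2(S)_o$ data, while $\widehat{g_k}=h^k\widehat{f_\infty}$ need not be Schwartz because comparable oscillation controls $h$ but none of its derivatives. Your density repair does close this gap, and it can be made airtight exactly as you hint: since $\widehat{g_k}$ is continuous, supported in $[\Lambda,\infty)$ and dominated by $C_0^k$ times a Schwartz function, an even mollification $\widehat{g_{k,m}}=\widehat{g_k}\ast\rho_{1/m}$ lies in $\mathscr{S}(\R)_e$ (hence $g_{k,m}\in\mathscr{S}^2(S)_o$ by the Abel-transform isomorphism), converges to $\widehat{g_k}$ both in the $H^\beta$ norm and in $L^1({|{\bf c}(\lambda)|}^{-2}d\lambda)$, and the crude bound $S^*_{\psi_1}v(x)\le\int|\widehat{v}(\lambda)|\,{|{\bf c}(\lambda)|}^{-2}d\lambda$ together with sublinearity and Fatou transfers the maximal estimate from $g_{k,m}$ to $g_k$ with the same constant; this is cleaner than passing through an $L^p$-limit that ``dominates'' $S^*_{\psi_1}g_k$, but it is the same idea. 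Two cosmetic points: you should state that the cutoff $\phi$ is $[0,1]$-valued so that $\|f_\infty\|_{H^\beta(S)}\le\|f\|_{H^\beta(S)}$, and the final low-frequency bound $\|f_0\|_{L^2(S)}\lesssim\|f\|_{H^\beta(S)}$ carries the harmless constant $(Q^2/4)^{-\beta/2}$. With these details supplied, the proposal is a complete proof of the transference principle as stated.
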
 
\subsection{A simplified version of the Euclidean result:} The solution of the fractional Schr\"odinger equation (\ref{frac_schrodinger}) for $a \in (0,1)$, on $\R^n$ with initial data $f \in \mathscr{S}(\R^n)_o$ is again radial and is given by,
\begin{equation} \label{solution} 
\tilde{S}_t f(r):= \int_{0}^\infty \J_{\frac{n-2}{2}}(\lambda r)\:e^{it\lambda^a}\:\mathscr{F}f(\lambda)\: \lambda^{n-1}\: d\lambda\:.
\end{equation}
The Homogeneous fractional $L^2$-Sobolev spaces on $\R^n$ are defined as, 
\begin{equation*}
\dot{H}^\beta(\R^n):=\left\{f \in L^2(S): {\|f\|}_{\dot{H}^\beta(\R^n)}:= {\left(\int_0^\infty \lambda^{2\beta}\: {|\mathscr{F}f(\lambda)|}^2 \lambda^{n-1} d\lambda\right)}^{1/2}< \infty\right\}\:.
\end{equation*}
Next for $q \ge 2$, set
\begin{equation*}
\delta(q)= \frac{c_2-1}{2}+n\left(\frac{1}{2}-\frac{1}{q}\right)\:,
\end{equation*}
where $c_2$ is as defined in (\ref{Walther_constants}). We now recall Walther's result on the unit ball in $\R^n$:
\begin{lemma}\cite[Theorem $2.7$]{Wa2} \label{Walther_result}
Let $a \in (0,1)\:,\:\beta \in \left(\frac{a}{4},\min\left\{\frac{a}{2},\frac{1}{4}\right\}\right)\:,\: q \in [2,2/c_2]$ and $n>1$\:. Then there is a positive number $C$ independent of $f \in \mathscr{S}(\R^n)_o$ such that the following inequality holds
\begin{equation}\label{Walther_estimate}
\left(\int_0^1 \left(\displaystyle\sup_{t \in (-1,1)}\left|\tilde{S}_t f(r)\right|\right)^q r^{q\delta(q)+n-1}\:dr\right)^{1/q} \le C\: {\|f\|}_{\dot{H}^\beta(\R^n)}\:.
\end{equation}
\end{lemma}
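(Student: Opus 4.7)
The plan, following Walther, is to reduce the weighted $L^q$ maximal estimate to a kernel bound for a linearized one-dimensional oscillatory integral, and then invoke Lemma \ref{oscillatory_integral_estimate} via a $TT^*$ argument together with a weighted Hardy--Littlewood--Sobolev-type inequality.

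\textbf{Reduction.} Using Lemma \ref{bessel_function_expansion}, one writes $\J_{(n-2)/2}(\lambda r) = c\,(\lambda r)^{-(n-1)/2}\cos(\lambda r - \phi) + E(\lambda r)$ for $\lambda r \gtrsim 1$, with $|E(\lambda r)| \lesssim (\lambda r)^{-(n+1)/2}$, together with the uniform bound $|\J_{(n-2)/2}| \lesssim 1$ on the complementary region. The contribution of the error term and of the region $\lambda r \lesssim 1$ are absorbed by Plancherel in $\lambda$ combined with integrability of the weight $r^{q\delta(q)+n-1}$ on $(0,1)$. For the main term, pull out $r^{-(n-1)/2}$, split the cosine into $e^{\pm i(\lambda r - \phi)}/2$, and set $g(\lambda) := \lambda^{\beta + (n-1)/2}\mathscr{F}f(\lambda)$, so that $\|g\|_{L^2(d\lambda)} = \|f\|_{\dot H^\beta(\R^n)}$. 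Using (\ref{Walther_constants}) one checks that the residual weight is exactly $r^{qc_2/2-1}$, and after linearizing the $t$-supremum by a measurable $t:(0,1)\to(-1,1)$, the claim reduces to
$$\left\|Tg\right\|_{L^q(r^{qc_2/2-1}\,dr,\,(0,1))} \;\lesssim\; \|g\|_{L^2(0,\infty)}, \qquad Tg(r) := \int_0^\infty \lambda^{-\beta}\,e^{i(\pm\lambda r + t(r)\lambda^a)}\,g(\lambda)\,d\lambda.$$

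\textbf{$TT^*$ and weighted kernel estimate.} The operator $TT^*$ on $(0,1)$ has kernel $K(r,s) = \int_0^\infty \lambda^{-2\beta}\,e^{i(\pm\lambda(r-s)+(t(r)-t(s))\lambda^a)}\,d\lambda$, and Lemma \ref{oscillatory_integral_estimate} (applied with $A = 2$, $\varepsilon = t(r) - t(s) \in [-2,2]$, and a limiting argument as $N \to \infty$ in the cutoff $\chi(\lambda/N)$) yields the uniform bound $|K(r,s)| \lesssim |r-s|^{-c_1} + |r-s|^{-c_2}$, with $0 < c_1 < c_2 < 1$ by (\ref{Walther_constants_relations}). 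Hence $TT^*$ is dominated by two Riesz-type potentials on $(0,1)^2$. At the unweighted endpoint $q = 2/c_2$, the bound $\|TT^*h\|_{L^{2/c_2}} \lesssim \|h\|_{L^{(2/c_2)'}}$ is the classical Hardy--Littlewood--Sobolev inequality applied to the fractional integral of order $1 - c_2$ (the $|r-s|^{-c_1}$ piece being subsumed since $|r-s| \le 1$). At $q = 2$ the weight is $r^{c_2-1}$, and the bound $\|TT^*h\|_{L^2(r^{c_2-1}dr)} \lesssim \|h\|_{L^2(r^{1-c_2}dr)}$ is the Stein--Weiss fractional-integral inequality; its admissibility conditions correspond, on the Fourier side, precisely to (\ref{Pitt's_ineq_cond}) for Pitt's inequality (\ref{Pitt's_ineq}), and are ensured by the hypothesis $\beta \in (a/4,\min\{a/2,1/4\})$. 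Complex interpolation between these two endpoints then covers the full range $q \in [2, 2/c_2]$.

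\textbf{Main obstacle.} The heart of the proof is the uniform oscillatory kernel bound, which is supplied by Lemma \ref{oscillatory_integral_estimate}. Granting this, the principal remaining technical burden is weight bookkeeping: verifying that the exponent $qc_2/2 - 1$ emerging in the reduction lands exactly at the Stein--Weiss admissibility edge at $q = 2$, and that the permissible range of $q$ is delimited by the two endpoints at which the HLS and Stein--Weiss inequalities close, so that complex interpolation yields precisely the asserted range $[2,2/c_2]$.
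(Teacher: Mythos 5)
This lemma is not proved in the paper at all: it is imported verbatim as Walther's Theorem 2.7 from \cite{Wa2}, so there is no in-paper argument to compare against. Judged on its own, your reconstruction is sound and matches the known route. The bookkeeping you flag as the main burden does close: with $\delta(q)=\frac{c_2-1}{2}+n(\frac12-\frac1q)$ the main Bessel term indeed leaves the weight $r^{qc_2/2-1}$; at $q=2/c_2$ the HLS exponent relation $\frac1q=\frac1{p}-(1-c_2)$ with $p=(2/c_2)'$ is exact; at $q=2$ the Stein--Weiss conditions for the kernel $|r-s|^{-c_2}$ with symmetric weights $r^{\pm(1-c_2)/2}$ hold precisely because $0<c_2<1$ (equivalently, the Pitt conditions (\ref{Pitt's_ineq_cond}) with $\alpha=\alpha_1=(1-c_2)/2$); and Stein--Weiss interpolation with change of measure reproduces exactly the weight $r^{qc_2/2-1}$ at intermediate $q$. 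Two points you gloss over but which are repairable by the same crude estimates you invoke: (i) the error term and the region $\lambda r\lesssim1$ are controlled by Cauchy--Schwarz against the $\dot H^\beta$ weight, and the resulting power of $r$ is integrable against $r^{q\delta(q)+n-1}$ if and only if $\beta>\frac{1-c_2}{2}$, which is equivalent to $\beta<\frac14$ and hence available; (ii) when you extend the main oscillatory term from $\{\lambda r\gtrsim1\}$ to all $\lambda>0$ to make $TT^*$ applicable, the discarded piece over $\lambda<1/r$ must be estimated separately (it is bounded by $r^{\beta-1/2-(n-1)/2}\|g\|_2$, again admissible under $\beta<\frac14$), since Lemma \ref{oscillatory_integral_estimate} only truncates smoothly at large frequency. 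It is also worth noting that your $q=2$ endpoint is exactly the mechanism the paper itself uses for the annulus estimate of $T_5$ in the proof of Theorem \ref{maximal_bddness_thm} (oscillatory kernel bound, Riesz potential identity (\ref{riesz_identity}), Pitt's inequality), so your proposal is consistent with the paper's toolkit while supplying the full weighted $L^q$ range that the paper simply quotes.
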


We now obtain a simplified version of Lemma \ref{Walther_result} for arbitrary balls centred at the origin, which will be useful in the proof of Theorem \ref{maximal_bddness_thm}: 
\begin{lemma}\label{Rn_result}
For $a \in (0,1)\:,\:\beta \in \left(\frac{a}{4},\min\left\{\frac{a}{2},\frac{1}{4}\right\}\right)\:,\:n>1$, any $R>0$ and any $f \in \mathscr{S}(\R^n)_o$, we have
\begin{equation}\label{Rn_estimate}
\left(\int_0^R \left(\displaystyle\sup_{t \in (0,1)}\left|\tilde{S}_t f(r)\right|\right)^2 r^{n-1}\:dr\right)^{1/2} \lesssim {\|f\|}_{\dot{H}^\beta(\R^n)}\:,
\end{equation}
where the implicit constant depends only on $R$ and $\beta$\:. 
\end{lemma}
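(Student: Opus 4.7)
The plan is to reduce the estimate to Lemma \ref{Walther_result} specialized to $q = 2$, using a weight comparison for small radii $R$ and a rescaling argument for large $R$. With $q = 2$ we are within the admissible range $[2, 2/c_2]$ (since $c_2 < 1$ by \eqref{Walther_constants_relations}), the weight exponent in \eqref{Walther_estimate} becomes $q\delta(q) + n - 1 = c_2 + n - 2$, and the pointwise comparison $r^{n-1} \le r^{c_2 + n - 2}$ holds on $(0,1)$ because $1 - c_2 > 0$.

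When $R \le 1$, I would simply extend $\sup_{t \in (0,1)}$ to $\sup_{t \in (-1,1)}$, enlarge the range of integration from $(0,R)$ to $(0,1)$, and replace the weight $r^{n-1}$ by $r^{c_2 + n - 2}$. The resulting integral is precisely the left-hand side of \eqref{Walther_estimate} with $q = 2$, and is therefore controlled by $\|f\|_{\dot H^\beta(\R^n)}$ via Lemma \ref{Walther_result}.

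When $R > 1$, the plan is to invoke the scaling symmetry of the fractional Schr\"odinger evolution. Setting $g(x) := f(Rx)$, a short calculation from the representation \eqref{solution} (after substituting $\mu = \lambda/R$ and using the dilation rule $\mathscr{F} g(\lambda) = R^{-n} \mathscr{F} f(\lambda/R)$ valid for radial functions) yields the identity
\begin{equation*}
\tilde{S}_s f(Rr) = \tilde{S}_{s/R^a}\, g(r), \qquad s \in \R,\ r > 0.
\end{equation*}
A change of variable $r = R\rho$ in the left-hand side of \eqref{Rn_estimate}, combined with this identity, transforms the target integral into
\begin{equation*}
R^n \int_0^1 \sup_{s \in (0,R^{-a})} |\tilde{S}_s g(\rho)|^2\, \rho^{n-1}\, d\rho.
\end{equation*}
Since $R > 1$ and $a \in (0,1)$ force $R^{-a} < 1$, the inner supremum is majorised by $\sup_{s \in (-1,1)}$, so the previous step (applied to $g$) dominates the display by $R^n\,\|g\|_{\dot H^\beta(\R^n)}^2$. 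A parallel change of variable in the defining integral of the Sobolev norm gives $\|g\|_{\dot H^\beta(\R^n)}^2 = R^{2\beta - n}\|f\|_{\dot H^\beta(\R^n)}^2$, so the overall bound reads $R^{2\beta}\|f\|_{\dot H^\beta(\R^n)}^2$, which is admissible because the implicit constant is allowed to depend on $R$ and $\beta$.

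I do not foresee any serious obstacle: the proof is entirely a scaling exercise on top of Walther's inequality. The only piece requiring care is the direction of the dilation, namely scaling up by a factor of $R$ (rather than down by $R^{-1}$), so that the transformed time interval $(0, R^{-a})$ lands inside Walther's admissible window $(-1,1)$; a naive rescaling in the opposite direction would blow the supremum range up past what Lemma \ref{Walther_result} controls.
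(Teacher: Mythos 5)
Your proposal is correct and follows essentially the same route as the paper: specialize Walther's inequality to $q=2$, use $\delta(2)<0$ (equivalently $c_2<1$) to dominate the weight $r^{n-1}$ by $r^{c_2+n-2}$ on $(0,1)$, and handle $R>1$ by the dilation $f(R\cdot)$ together with the identity $\tilde{S}_t f(Rr)=\tilde{S}_{t/R^a}f(R\cdot)(r)$ and the scaling of the homogeneous Sobolev norm. The only cosmetic difference is that the paper first enlarges the time supremum to $(-R^a,R^a)$ and then rescales to $(-1,1)$, whereas you rescale first so the time window shrinks to $(0,R^{-a})\subset(-1,1)$; both yield the same bound $R^{\beta}\|f\|_{\dot H^\beta(\R^n)}$ up to constants.
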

\begin{proof}
By (\ref{Walther_constants_relations}) we have $c_2<1$ and hence $\delta(2)<0$\:. Then for $r \in (0,1)$, $r^{2\delta(2)}>1$ and hence (\ref{Walther_estimate}) implies that
\begin{equation}\label{eq1}
\left(\int_0^1 \left(\displaystyle\sup_{t \in (-1,1)}\left|\tilde{S}_t f(r)\right|\right)^2 r^{n-1}\:dr\right)^{1/2} \lesssim {\|f\|}_{\dot{H}^\beta(\R^n)}\:.
\end{equation}
For any $R \in (0,1)$, (\ref{Rn_estimate}) follows from (\ref{eq1}) as
\begin{eqnarray*}
\left(\int_0^R \left(\displaystyle\sup_{t \in (0,1)}\left|\tilde{S}_t f(r)\right|\right)^2 r^{n-1}\:dr\right)^{1/2} &\le & \left(\int_0^1 \left(\displaystyle\sup_{t \in (0,1)}\left|\tilde{S}_t f(r)\right|\right)^2 r^{n-1}\:dr\right)^{1/2} \\
& \le & \left(\int_0^1 \left(\displaystyle\sup_{t \in (-1,1)}\left|\tilde{S}_t f(r)\right|\right)^2 r^{n-1}\:dr\right)^{1/2}\:.
\end{eqnarray*}
Now for $R>1$, consider the dilated function $f_R(x):=f(Rx)$, for $x \in \R^n$. Now as the Fourier transforms of $f$ and $f_R$ are related as
\begin{equation} \label{eq2}
\mathscr{F}f(\lambda)= R^n \mathscr{F}f_R(R\lambda)\:,\: \lambda \in (0,\infty)\:,
\end{equation}
an elementary computation using the formula (\ref{solution}) yields 
\begin{equation} \label{eq3}
\tilde{S}_t f(r)= \tilde{S}_{t/R^a} f_R(r/R)\:,\:r \in [0,R]\:.
\end{equation}
Then for $R>1$, using (\ref{eq3}) and change of variables, we get
\begin{eqnarray} \label{eq4}
\left(\int_0^R \left(\displaystyle\sup_{t \in (0,1)}\left|\tilde{S}_t f(r)\right|\right)^2 r^{n-1}\:dr\right)^{1/2} &\le & \left(\int_0^R \left(\displaystyle\sup_{t \in (-R^a,R^a)}\left|\tilde{S}_t f(r)\right|\right)^2 r^{n-1}\:dr\right)^{1/2} \nonumber\\
&=& \left(\int_0^R \left(\displaystyle\sup_{t \in (-R^a,R^a)}\left|\tilde{S}_{\frac{t}{R^a}} f_R\left(\frac{r}{R}\right)\right|\right)^2 r^{n-1}\:dr\right)^{1/2} \nonumber\\
&=& R^{\frac{n}{2}}\left(\int_0^1 \left(\displaystyle\sup_{t \in (-1,1)}\left|\tilde{S}_t f_R(r)\right|\right)^2 r^{n-1}\:dr\right)^{1/2}\:.
\end{eqnarray}
Then applying (\ref{eq1}) to $f_R$ and combining it with (\ref{eq4}), it follows that
\begin{equation}\label{eq5}
\left(\int_0^R \left(\displaystyle\sup_{t \in (0,1)}\left|\tilde{S}_t f(r)\right|\right)^2 r^{n-1}\:dr\right)^{1/2} \lesssim R^{\frac{n}{2}}{\|f_R\|}_{\dot{H}^\beta(\R^n)}\:.
\end{equation}
Finally since by (\ref{eq2}), the Homogeneous Sobolev norms of $f$ and $f_R$ are related as
\begin{equation*}
{\|f_R\|}_{\dot{H}^\beta(\R^n)} = R^{\beta-\frac{n}{2}}{\|f\|}_{\dot{H}^\beta(\R^n)}\:,
\end{equation*}
plugging this in (\ref{eq5}) yields 
\begin{equation*}
\left(\int_0^R \left(\displaystyle\sup_{t \in (0,1)}\left|\tilde{S}_t f(r)\right|\right)^2 r^{n-1}\:dr\right)^{1/2} \lesssim R^\beta{\|f\|}_{\dot{H}^\beta(\R^n)}\:.
\end{equation*}
This completes the proof of Lemma \ref{Rn_result}.
\end{proof}
\section{Proof of Theorem \ref{maximal_bddness_thm}}
By the transference principle (Lemma \ref{transference_principle}) and Remark \ref{concavity_transference}, Theorem \ref{maximal_bddness_thm} for a general asymptotically concave dispersive equation of degree $a \in (0,1)$, follows from the result for the special case of the fractional Schr\"odinger equation with degree $a \in (0,1)$, corresponding to $\tilde{\Delta}$. Hence it suffices to give the proof of Theorem \ref{maximal_bddness_thm} for this special case.

We fix $a \in (0,1)$. To lighten the notation, we denote the maximal function for the fractional Schr\"odinger equation with degree $a$, corresponding to $\tilde{\Delta}$, simply by $S^*$. For $f \in \mathscr{S}^2(S)_o$\:, we will prove the stronger estimate in terms of the homogeneous Sobolev norm:
\begin{equation} \label{stronger_estimate}
{\|S^* f\|}_{L^2\left(B_R\right)} \lesssim {\|f\|}_{\dot{H}^{\beta}(S)}\:,\text{ for }\: \beta \in I_a \:,
\end{equation}
where 
\begin{equation*}
I_a:=\left(\frac{a}{4},\min\left\{\frac{a}{2},\frac{1}{4}\right\}\right)\:.
\end{equation*}
Then by (\ref{sobolev_inclusion}), Theorem \ref{maximal_bddness_thm} follows. In order to prove the estimate (\ref{stronger_estimate}), it suffices to prove the following estimate in terms of the linearized maximal function,
\begin{equation} \label{linearized_maximal_estimate}
{\|Tf\|}_{L^2\left(B_R\right)} \lesssim {\|f\|}_{\dot{H}^{\beta}(S)}\:,\text{ for }\: \beta \in I_a \:,
\end{equation}
where,
\begin{equation*}
T f(s) := \int_{0}^\infty \varphi_\lambda(s)\:e^{it(s)\lambda^a}\:\widehat{f}(\lambda)\: {|{\bf c}(\lambda)|}^{-2}\: d\lambda \:,
\end{equation*}
and $t(\cdot): [0,\infty) \to (0,1)$ is a measurable function. The key strategy in the proof of (\ref{linearized_maximal_estimate}) is to invoke the various series expansions of $\varphi_\lambda$. Now as the validity of these series expansions depend on the geodesic distance from the group identity, it prompts us to decompose the ball $B_R$ into a closed ball and an open annulus, $$B_R = \overline{B_{R_0}} \sqcup A(R_0,R)\:,$$
where
\begin{equation*}
\overline{B_{R_0}}=\{x \in S : d(e,x)\leq R_0 \}\:\text{ and } A(R_0,R) = \{x \in S : R_0 < d(e,x) < R\}\:,
\end{equation*}
and then to estimate $T$ on $\overline{B_{R_0}}$ and $A(R_0,R)$ separately.

\subsection{Estimating $T$ on $\overline{B_{R_0}}$:} We first decompose $T$ in terms of small and large frequency. More precisely, we consider a non-negative, even function $\eta \in C^\infty_c(\R)$ such that $Supp(\eta) \subset \left\{\xi : 1/2 < |\xi| <2\right\}$ and 
\begin{equation*}
\displaystyle\sum_{k=-\infty}^\infty \eta \left(2^{-k} \xi\right)=1\:,\: \text{ for } \xi \ne 0\:.
\end{equation*}
Let us now define,
\begin{equation*}
\eta_1(\xi):= \displaystyle\sum_{k=-\infty}^0 \eta(2^{-k} \xi) \:,\text{   and  } \eta_2(\xi):= \displaystyle\sum_{k=1}^\infty \eta(2^{-k} \xi) \:.
\end{equation*}
We note that both $\eta_1$ and $\eta_2$ are even non-negative smooth functions with $Supp(\eta_1) \subset (-2,2)$, $Supp(\eta_2) \subset \R \setminus (-1,1)$ and $\eta_1+\eta_2 \equiv 1$. Accordingly, we decompose $T$ as,
\begin{equation}\label{first_decomposition}
Tf(s)= T_1 f(s) + T_2 f(s)\:,
\end{equation}
where,
\begin{eqnarray*}
T_1 f(s):= \int_{0}^2 \varphi_\lambda(s)\:e^{it(s)\lambda^a}\:\widehat{f}(\lambda)\:\eta_1(\lambda)\: {|{\bf c}(\lambda)|}^{-2}\: d\lambda\:, \\
T_2 f(s):= \int_{1}^\infty \varphi_\lambda(s)\:e^{it(s)\lambda^a}\:\widehat{f}(\lambda)\:\eta_2(\lambda)\: {|{\bf c}(\lambda)|}^{-2}\: d\lambda\:.
\end{eqnarray*}
Note that $T_1$ and $T_2$ correspond to the small and large frequency respectively.

We first estimate $T_1$. For $s \in [0,R_0]$, by the boundedness of $\varphi_\lambda$ and the multiplier, followed by an application of the Cauchy-Schwarz inequality, we get
\begin{eqnarray*}
\left|T_1f(s)\right| & \le & \int_{0}^2 \left|\widehat{f}(\lambda)\right|\: {|{\bf c}(\lambda)|}^{-2}\: d\lambda \\
&\le & {\|f\|}_{\dot{H}^{\beta}(S)} \left(\int_{0}^2 \frac{{|{\bf c}(\lambda)|}^{-2}}{\lambda^{2\beta}}d\lambda\right)^{1/2}\:.
\end{eqnarray*}
Now as $\beta \in I_a$, in particular, $2\beta<1$. Then combining this with the small frequency asymptotics of ${|{\bf c}(\cdot)|}^{-2}$ given by (\ref{plancherel_measure}), it follows that
\begin{equation*}
\left|T_1f(s)\right| \lesssim  {\|f\|}_{\dot{H}^{\beta}(S)}\:,\:\: s \in [0,R_0]\:,
\end{equation*}
which in turn implies that
\begin{equation} \label{first_estimate}
{\|T_1f\|}_{L^2\left(\overline{B_{R_0}}\right)} \lesssim {\|f\|}_{\dot{H}^{\beta}(S)}\:,\text{ for }\: \beta \in I_a \:.
\end{equation}

Now to estimate $T_2$, we make use of the Bessel series expansion of $\varphi_\lambda$ (Lemma \ref{bessel_series_expansion}). For $s\in [0,R_0]$, putting $M=0$ in Lemma \ref{bessel_series_expansion}, we get
\begin{equation} \label{bessel_expansion}
\varphi_\lambda(s)= c_0 {\left(\frac{s^{n-1}}{A(s)}\right)}^{1/2} \J_{\frac{n-2}{2}}(\lambda s) + E_1(\lambda,s),\:\:\:\:\:\:\:\:\:\:\lambda \ge 1\:,
\end{equation}
where
\begin{equation} \label{bessel_error_estimate}
	\left|E_1(\lambda,s) \right| \lesssim  \begin{cases}
	 s^2,  & \text{ if  }\: \lambda s \le 1 \\
	s^2 {(\lambda s)}^{-\left(\frac{n+1}{2} \right)}, &\text{ if  }\: \lambda s > 1 \:.
	\end{cases}
\end{equation}
Then invoking the decomposition of $\varphi_\lambda$ given in (\ref{bessel_expansion}), we further decompose $T_2f$ as follows
\begin{equation} \label{second_decomposition}
T_2 f(s)=T_3 f(s)+T_4 f(s)\:,
\end{equation}
where
\begin{eqnarray*}
T_3 f(s) &=& c_0 {\left(\frac{s^{n-1}}{A(s)}\right)}^{1/2} \int_{1}^\infty \J_{\frac{n-2}{2}}(\lambda s)\:e^{it(s)\lambda^a}\:\widehat{f}(\lambda)\:\eta_2(\lambda)\: {|{\bf c}(\lambda)|}^{-2}\: d\lambda\:, \\
T_4 f(s)&=& \int_{1}^\infty E_1(\lambda, s)\:e^{it(s)\lambda^a}\:\widehat{f}(\lambda)\:\eta_2(\lambda)\: {|{\bf c}(\lambda)|}^{-2}\: d\lambda\:. 
\end{eqnarray*}

To estimate $T_3$, we note that as $Supp(\widehat{f}\cdot \eta_2) \subset [1,\infty)$, by the Schwartz isomorphism theorem  there exists a unique $F \in \mathscr{S}^2(S)^\infty_o$ (defined as in (\ref{new_schwartz_spaces})) such that
\begin{equation} \label{piece3_eq1}
\widehat{f}(\lambda) \eta_2(\lambda) = \widehat{F}(\lambda)\:.
\end{equation}
Then by Lemma \ref{schwartz_correspondence} there exists a unique $g \in \mathscr{S}(\R^n)^\infty_o$ such that
\begin{equation} \label{euclidean_support}
Supp(\mathscr{F}g)=Supp(\widehat{F}) \subset [1,\infty)\:,
\end{equation}
with $\mathscr{F}g$ and $\widehat{F}$ related as,
\begin{equation} \label{piece3_eq2}
\lambda^{n-1}\:\mathscr{F}g(\lambda)=\:{|{\bf c}(\lambda)|}^{-2}\:\widehat{F}(\lambda) \:.
\end{equation}
Plugging the relation (\ref{piece3_eq2}) in the definition of the Homogeneous Sobolev norms and then using the large frequency asymptotics of ${|{\bf c}(\cdot)|}^{-2}$ given in (\ref{plancherel_measure}), it follows that the Homogeneous Sobolev norms of $g$ and $F$ are comparable, that is for all $\beta \ge 0$,
\begin{equation} \label{piece3_eq3}
\|g\|_{\dot{H}^\beta(\R^n)} \asymp \|F\|_{\dot{H}^\beta(S)}\:.
\end{equation} 
We now consider $\tilde{T}$, the linearized maximal function  corresponding to the fractional Schr\"odinger equation (\ref{frac_schrodinger}) for $a \in (0,1)$, on $\R^n$. $\tilde{T}$ is defined on $\mathscr{S}(\R^n)_o$, for $s \in [0,\infty)$  by,
\begin{equation*}
\tilde{T}h(s)= \int_{0}^\infty \J_{\frac{n-2}{2}}(\lambda s)\:e^{it(s)\lambda^a}\:\mathscr{F}h(\lambda)\: \lambda^{n-1}\: d\lambda,\:\text{ for } h \in  \mathscr{S}(\R^n)_o\:,
\end{equation*}
where $t(\cdot)$ is the same measurable function on $[0,\infty)$. Now using the relations (\ref{piece3_eq1})-(\ref{piece3_eq2}) and the local growth asymptotics of the density function (\ref{density_function}) for $s \in [0,R_0]$, we obtain a pointwise comparison of the moduli of $T_3 f$ and $\tilde{T}g$:
\begin{eqnarray} \label{piece3_eq4}
|T_3 f(s)| & \asymp & \left|\int_{1}^\infty \J_{\frac{n-2}{2}}(\lambda s)\:e^{it(s)\lambda^a}\:\widehat{f}(\lambda)\:\eta_2(\lambda)\: {|{\bf c}(\lambda)|}^{-2}\: d\lambda\right|\nonumber\\
&=& \left|\int_{1}^\infty \J_{\frac{n-2}{2}}(\lambda s)\:e^{it(s)\lambda^a}\:\widehat{F}(\lambda)\: {|{\bf c}(\lambda)|}^{-2}\: d\lambda\right|\nonumber\\
&=& \left|\int_{1}^\infty \J_{\frac{n-2}{2}}(\lambda s)\:e^{it(s)\lambda^a}\:\mathscr{F}g(\lambda)\: \lambda^{n-1}\: d\lambda\right|\nonumber \\
& = & |\tilde{T}g(s)|\:.
\end{eqnarray}
Then by (\ref{piece3_eq4}), the local growth asymptotics of the density function (\ref{density_function}), Lemma \ref{Rn_result} and (\ref{piece3_eq3}), it follows for $\beta \in I_a$,
\begin{equation} \label{second_estimate}
\|T_3 f\|_{L^2(\overline{B_{R_0}})} \lesssim \|\tilde{T} g\|_{L^2(\overline{B(o,{R_0})})} \lesssim \|g\|_{\dot{H}^\beta(\R^n)} \lesssim \|F\|_{\dot{H}^\beta(S)} \le \|f\|_{\dot{H}^\beta(S)} \:.
\end{equation}

To estimate $T_4$, we make use of the pointwise bounds of $E_1$ given by (\ref{bessel_error_estimate}). To see this, we first consider when $s \in [1,R_0]$. In this case, by (\ref{bessel_error_estimate}), an application of the Cauchy-Schwarz inequality and the large frequency asymptotics of ${|{\bf c}(\cdot)|}^{-2}$ given in (\ref{plancherel_measure}), it follows for $\beta \in I_a$,
\begin{eqnarray} \label{piece4_eq1}
|T_4f(s)| &\lesssim & s^{-\frac{n-3}{2}} \int_1^\infty \lambda^{-\frac{n+1}{2}}\:|\what{f}(\lambda)|\:{|{\bf c}(\lambda)|}^{-2}\: d\lambda \nonumber\\
&\lesssim & \|f\|_{\dot{H}^\beta(S)} \left(\int_1^\infty \frac{d \lambda}{\lambda^{2(\beta +1)}}\right)^{1/2} \nonumber\\
&\lesssim & \|f\|_{\dot{H}^\beta(S)}\:.
\end{eqnarray}
For $s \in [0,1]$, again by (\ref{bessel_error_estimate}) and an application of the Cauchy-Schwarz inequality,
\begin{eqnarray} \label{piece4_eq2}
|T_4f(s)| &\lesssim & s^2 \int_1^{\frac{1}{s}} |\what{f}(\lambda)|\:{|{\bf c}(\lambda)|}^{-2}\: d\lambda +  s^{-\frac{n-3}{2}} \int_{\frac{1}{s}}^\infty \lambda^{-\frac{n+1}{2}}\:|\what{f}(\lambda)|\:{|{\bf c}(\lambda)|}^{-2}\: d\lambda \nonumber\\
&\lesssim & \|f\|_{\dot{H}^\beta(S)} \left[s^2\: \mathcal{I}_1(s)^{1/2}\:+\: s^{-\frac{n-3}{2}}\:\mathcal{I}_2(s)^{1/2}\right] \:,
\end{eqnarray}
where 
\begin{equation*}
\mathcal{I}_1(s) = \int_1^{\frac{1}{s}} \frac{{|{\bf c}(\lambda)|}^{-2}}{\lambda^{2\beta}}\:d\lambda\:\:\text{ and }\:\: \mathcal{I}_2(s)= \int_{\frac{1}{s}}^\infty \frac{{|{\bf c}(\lambda)|}^{-2}}{\lambda^{2\beta+n+1}}\:d\lambda\:.
\end{equation*}
Then as for $\beta \in I_a\:,\: n-2\beta-1>0$, it follows from the large frequency asymptotics of ${|{\bf c}(\cdot)|}^{-2}$ given in (\ref{plancherel_measure}) that
\begin{equation} \label{piece4_eq3}
\mathcal{I}_1(s) \lesssim \int_1^{\frac{1}{s}} \lambda^{n-2\beta-1}\:d\lambda \lesssim s^{-(n-2\beta)}\:,
\end{equation} 
\begin{equation} \label{piece4_eq4}
\mathcal{I}_2(s) \lesssim \int_{\frac{1}{s}}^\infty \frac{d\lambda}{\lambda^{2(\beta+1)}} \lesssim s^{2\beta +1}\:.
\end{equation}
Then plugging (\ref{piece4_eq3}) and (\ref{piece4_eq4}) in (\ref{piece4_eq2}), we get that for $s \in [0,1]$ and $\beta \in I_a$,
\begin{equation} \label{piece4_eq5}
|T_4f(s)| \lesssim s^{\beta +2 -\frac{n}{2}}\: \|f\|_{\dot{H}^\beta(S)}\:.
\end{equation}
Then combining (\ref{piece4_eq1}) and (\ref{piece4_eq5}), for $\beta \in I_a$, it follows from the local growth asymptotics of the density function (\ref{density_function}) that
\begin{equation} \label{third_estimate}
\|T_4 f\|_{L^2(\overline{B_{R_0}})} \lesssim \|f\|_{\dot{H}^\beta(S)} \:\left(\int_0^1 s^{2\beta +3}\:ds \:+\: \int_1^{R_0} s^{n-1}\:ds\right)^{1/2} \lesssim \|f\|_{\dot{H}^\beta(S)}\:.
\end{equation}
Now the decomposition (\ref{second_decomposition}) along with the estimates (\ref{second_estimate}) and (\ref{third_estimate}) yield for $\beta \in I_a$,
\begin{equation*} 
\|T_2 f\|_{L^2(\overline{B_{R_0}})} \lesssim \|f\|_{\dot{H}^\beta(S)}\:,
\end{equation*}
which along with the decomposition (\ref{first_decomposition}) and the estimate (\ref{first_estimate}) yield the desired estimate of $T$:
\begin{equation} \label{estimate_small_ball} 
\|T f\|_{L^2(\overline{B_{R_0}})} \lesssim \|f\|_{\dot{H}^\beta(S)}\:,\:\:\beta \in I_a\:.
\end{equation}
\subsection{Estimating $T$ on $A(R_0,R)$:} For $R_0 < s < R$ and $\lambda >0$, using the series expansion  (\ref{anker_series_expansion}) and the estimate (\ref{coefficient_estimate}) on the coefficients $\Gamma_\mu$, we get
\begin{equation} \label{annulus_pf_eq1}
\varphi_\lambda (s) = 2^{-\frac{m_\z}{2}} {A(s)}^{-\frac{1}{2}} \left\{{\bf c}(\lambda)  e^{i \lambda s} + {\bf c}(-\lambda) e^{-i\lambda  s}\right\} + \mathscr{E}_2(\lambda,s)\:,
\end{equation}
where
\begin{equation*}
\mathscr{E}_2(\lambda,s) = 2^{-\frac{m_\z}{2}} {A(s)}^{-\frac{1}{2}} \left\{{\bf c}(\lambda)  \displaystyle\sum_{\mu=1}^\infty \Gamma_\mu(\lambda) e^{(i \lambda-\mu) s} + {\bf c}(-\lambda) \displaystyle\sum_{\mu=1}^\infty \Gamma_\mu(-\lambda) e^{-(i\lambda + \mu) s}\right\} \:,
\end{equation*}
and thus
\begin{equation} \label{annulus_pf_eq2}
\left|\mathscr{E}_2(\lambda,s)\right| \lesssim {A(s)}^{-\frac{1}{2}} \left|{\bf c}(\lambda)\right| {(1+\lambda)}^{-1} \:.
\end{equation}
Then invoking (\ref{annulus_pf_eq1}) in the formula defining the linearized maximal function and using the fact that ${|{\bf c}(\cdot)|}^{-2}$ is even on $\R \setminus \{0\}$, we get
\begin{eqnarray} \label{third_decomposition}
Tf(s) &=& 2^{-\frac{m_\z}{2}} {A(s)}^{-\frac{1}{2}} \int_{\R \setminus \{0\}} {\bf c}(\lambda)\:e^{i(\lambda s+t(s)|\lambda|^a)}\:\what{f}(\lambda)\:{|{\bf c}(\lambda)|}^{-2}\:d\lambda \nonumber\\
&& + \int_0^\infty \mathscr{E}_2(\lambda, s)\:e^{it(s)\lambda^a}\:\widehat{f}(\lambda)\: {|{\bf c}(\lambda)|}^{-2}\: d\lambda \nonumber\\
&=& T_5f(s)\:+\:T_6f(s)\:.
\end{eqnarray}
To obtain the estimate 
\begin{equation} \label{annulus_first_estimate} 
\|T_5 f\|_{L^2(A(R_0,R))} \lesssim \|f\|_{\dot{H}^\beta(S)}\:,\:\:\beta \in I_a\:,
\end{equation}
we first note that
\begin{eqnarray*}
T_5 f(s)\:{A(s)}^{\frac{1}{2}}&=&2^{-\frac{m_\z}{2}}  \int_{\R \setminus \{0\}} {\bf c}(\lambda)\:e^{i(\lambda s+t(s)|\lambda|^a)}\:\what{f}(\lambda)\:{|{\bf c}(\lambda)|}^{-2}\:d\lambda \\
&=& 2^{-\frac{m_\z}{2}}  \int_{\R \setminus \{0\}} {\bf c}(\lambda)\:e^{i(\lambda s+t(s)|\lambda|^a)}\:g(\lambda)\:|\lambda|^{-\beta}\:{|{\bf c}(\lambda)|}^{-1}\:d\lambda\:,
\end{eqnarray*}
where 
\begin{equation*}
g(\lambda) = \what{f}(\lambda)\:|\lambda|^\beta\:{|{\bf c}(\lambda)|}^{-1}\:,\:\: \lambda \in \R \setminus \{0\}\:.
\end{equation*}
Then setting
\begin{equation*}
Pg(s):= \int_{\R \setminus \{0\}} {\bf c}(\lambda)\:e^{i(\lambda s+t(s)|\lambda|^a)}\:g(\lambda)\:|\lambda|^{-\beta}\:{|{\bf c}(\lambda)|}^{-1}\:d\lambda\:,
\end{equation*}
that is,
\begin{equation*}
2^{\frac{m_\z}{2}}\:T_5 f(s)\:{A(s)}^{\frac{1}{2}}=Pg(s)\:,
\end{equation*}
we note that to prove (\ref{annulus_first_estimate}), it suffices to prove that 
\begin{equation} \label{annulus_es1_eq1}
\left(\int_{R_0}^R |Pg(s)|^2\:ds\right)^{\frac{1}{2}} \lesssim \left(\int_{\R \setminus \{0\}} |g(\lambda)|^2\:d\lambda\right)^{\frac{1}{2}}\:. 
\end{equation}
Now consider an even $\rho \in C^\infty_c(\R)$ which takes values in $[0,1]$, such that
\begin{eqnarray*}
\rho(\lambda)&=& 1,\:\:\:\:|\lambda| < 1,\\
&=&0,\:\:\:\:|\lambda| \ge 2\:.
\end{eqnarray*}
Then for $N>2$, $s \in (R_0,R)$, we set
\begin{equation*}
P_N g(s):= \int_{\R \setminus \{0\}} {\bf c}(\lambda)\:e^{i(\lambda s+t(s)|\lambda|^a)}\:\rho\left(\frac{\lambda}{N}\right)\:g(\lambda)\:|\lambda|^{-\beta}\:{|{\bf c}(\lambda)|}^{-1}\:d\lambda \:.
\end{equation*}
Now for $u \in C^\infty_c(R_0,R)$ and $\lambda \in \R \setminus \{0\}$, setting
\begin{equation*}
P^*_N u(\lambda)= \overline{{\bf c}(\lambda)}\:\rho\left(\frac{\lambda}{N}\right)\:|\lambda|^{-\beta}\:{|{\bf c}(\lambda)|}^{-1} \int_{R_0}^R e^{-i(\lambda s+t(s)|\lambda|^a)}\:u(s)\:ds\:,
\end{equation*}
it is easy to see that
\begin{equation*}
\int_{R_0}^R P_N v(s)\: \overline{u(s)}\: ds = \int_{\R \setminus \{0\}} v(\lambda)\: \overline{P^*_N u(\lambda)}\: d\lambda\:,
\end{equation*}
holds for all $u \in C^\infty_c(R_0,R)$ and $v \in L^2(\R \setminus \{0\})$ having suitable decay at infinity. Thus it suffices to prove that
\begin{equation} \label{annulus_es1_eq2}
{\left(\int_{\R \setminus \{0\}} {\left|P^*_N h(\lambda)\right|}^2 d\lambda\right)}^{\frac{1}{2}} \lesssim {\left(\int_{R_0}^R {|h(s)|}^2\: ds\right)}^{\frac{1}{2}}\:,\text{ for all }h \in C^\infty_c(R_0,R),
\end{equation}
with the implicit constant independent of $N$, as then letting $N \to \infty$, by duality we can obtain the estimate (\ref{annulus_es1_eq1}). Now by Fubini's theorem,
\begin{equation*}
\int_{\R \setminus \{0\}} {\left|P^*_N h(\lambda)\right|}^2 d\lambda = \int_{R_0}^R \int_{R_0}^R \mathcal{I}_N(s,s')\: h(s)\: \overline{h(s')}\: ds\: ds',
\end{equation*}
where
\begin{equation*}
\mathcal{I}_N(s,s') =  \int_{\R \setminus \{0\}} e^{i\left\{(s'-s)\lambda+(t(s')-t(s))|\lambda|^a\right\}}\: |\lambda|^{-2\beta}\: \rho\left(\frac{\lambda}{N}\right)^2\: d\lambda \:.
\end{equation*}
Then noting that $|s'-s| \le (R-R_0)$, $|t(s')-t(s)|\le 1$, we apply the oscillatory integral estimate given in Lemma \ref{oscillatory_integral_estimate} and use the relation (\ref{Walther_constants_relations}) to obtain
\begin{equation*}
\int_{\R \setminus \{0\}} {\left|P^*_N h(\lambda)\right|}^2 d\lambda \lesssim \int_{R_0}^R \int_{R_0}^R \frac{1}{{|s-s'|}^{c_2}}\: |h(s)|\: |h(s')|\: ds\: ds',
\end{equation*}
with the implicit constant independent of $N$. 
As $h \in C^\infty_c(R_0,R)$, we can think of it as an even $C^\infty_c$ function supported in $(-R,-R_0) \sqcup (R_0,R)$. We can therefore write the last integral as a one dimensional Riesz potential and apply (\ref{riesz_identity}) to get that for some positive number $c$,
\begin{eqnarray*}
\int_{R_0}^R \int_{R_0}^R \frac{1}{{|s-s'|}^{c_2}}\: |h(s)|\: |h(s')|\: ds\: ds' &=&  c\int_{0}^\infty I_{1-c_2} (|h|)(s)\:|h(s)|\:ds \\
&=& c \int_{\R} {|\xi|}^{c_2-1}\: {\left|\tilde{h}(\xi)\right|}^2\: d\xi \:.
\end{eqnarray*}
Now setting $\alpha:=(1-c_2)/2, p=2$, we note that $\alpha_1:= \alpha + \frac{1}{2} - \frac{1}{p} =\alpha$. Then as by (\ref{Walther_constants_relations}) $c_2 \in (0,1)$, it follows that $0<\alpha=\alpha_1<1/2$ and thus both the conditions in (\ref{Pitt's_ineq_cond}) are satisfied. Then by Pitt's inequality (\ref{Pitt's_ineq}), we get that
\begin{eqnarray*}
 \int_{\R} {|\xi|}^{c_2-1}\: {\left|\tilde{h}(\xi)\right|}^2\: d\xi
 &\lesssim &  \int_{\R} {|h(x)|}^2\: {|x|}^{1-c_2}\: dx \\
&=& c \int_{R_0}^R {|h(s)|}^{2}\: s^{1-c_2}\: ds 
 \le   c \:R^{1-c_2} \: {\|h\|}^2_{{L^2}(R_0,R)}\:.
\end{eqnarray*}
Thus we get (\ref{annulus_es1_eq2}) and consequently, the desired estimate (\ref{annulus_first_estimate}) for $T_5$. 

Finally for $T_6$, using the estimate on the remainder term (\ref{annulus_pf_eq2}) and Cauchy-Schwarz inequality, we obtain
\begin{eqnarray*}
\left|T_6 f(s)\right| & \lesssim & {A(s)}^{-\frac{1}{2}}\: \int_0^\infty  {(1+\lambda)}^{-1}\: \left|\widehat{f}(\lambda)\right|\: {|{\bf c}(\lambda)|}^{-1}\: d\lambda \\
& \le & {A(s)}^{-\frac{1}{2}}\: \left[\int_0^1 \left|\widehat{f}(\lambda)\right|\: {|{\bf c}(\lambda)|}^{-1}\: d\lambda + \int_1^\infty \lambda^{-1}\:\left|\widehat{f}(\lambda)\right|\: {|{\bf c}(\lambda)|}^{-1}\: d\lambda  \right] \\
& \le & {A(s)}^{-\frac{1}{2}}\: {\|f\|}_{\dot{H}^{\beta}(S)} \: \left[{\left(\int_0^1 \frac{d\lambda}{\lambda^{2\beta}}\right)}^{1/2} + {\left(\int_1^\infty \frac{d\lambda}{\lambda^{2(1+\beta)}}\right)}^{1/2}\right] \:.
\end{eqnarray*}
Now as $\beta \in I_a$, we have $0<2\beta<1$ and thus both the integrals on the right hand side are integrable. Hence, we get
\begin{equation*}
\left|T_6 f(s)\right| \lesssim {A(s)}^{-\frac{1}{2}}\: {\|f\|}_{\dot{H}^{\beta}(S)}\:,
\end{equation*}
which in turn yields
\begin{equation} \label{annulus_second_estimate}
\|T_6 f\|_{L^2(A(R_0,R))} \lesssim {\|f\|}_{\dot{H}^{\beta}(S)}  \:,\:\: \beta \in I_a\:.
\end{equation}
Now the decomposition (\ref{third_decomposition}) along with the estimates (\ref{annulus_first_estimate}) and (\ref{annulus_second_estimate}) yield 
\begin{equation*} 
\|T f\|_{L^2(A(R_0,R))} \lesssim {\|f\|}_{\dot{H}^{\beta}(S)}  \:,\:\: \beta \in I_a\:,
\end{equation*}
which along with (\ref{estimate_small_ball}) yields (\ref{linearized_maximal_estimate}) and completes the proof of Theorem \ref{maximal_bddness_thm}.

\section{Proof of Theorem \ref{sharpness_thm}}
Fix $a \in (0,1)$. We first work out the case of the Fractional Schr\"odinger equation of degree $a$, corresponding to $\tilde{\Delta}$. For $f \in \mathscr{S}^2(S)_o$, we recall the linearized maximal function,
\begin{equation*}
Tf(s) = \int_{0}^\infty \varphi_\lambda(s)\:e^{it(s)\lambda^a}\:\widehat{f}(\lambda)\: {|{\bf c}(\lambda)|}^{-2}\: d\lambda\:.
\end{equation*}
It suffices to prove that there is no inequality of the form,
\begin{equation} \label{counter_eq1}
\|T f\|_{L^2(B_1)} \lesssim {\|f\|}_{H^\beta(S)}\:,
\end{equation}
for any $\beta <a/4$. We consider an even function $\eta \in C^\infty_c(\R)$ which takes values in $[0,1]$, such that
\begin{eqnarray*}
\eta(\lambda)&=& 1,\:\:\:\:|\lambda| \le \frac{1}{2}\:,\\
&=&0,\:\:\:\:|\lambda| \ge 1\:.
\end{eqnarray*}
Then for integers $N \ge 2$, choose $f_N \in \mathscr{S}^2(S)_o$ such that 
\begin{equation*}
\widehat{f_N}(\lambda)=N^{-\frac{1}{2}}\: \eta \left(-N^{\left(\frac{a}{2}-1\right)} \lambda + N^{\frac{a}{2}}\right)|{\bf c}(\lambda)|\:.
\end{equation*}
It follows that $Supp(\widehat{f_N}) \subset \left(N-N^{\left(1-\frac{a}{2}\right)}\:,\:N+N^{\left(1-\frac{a}{2}\right)}\right)$ and 
\begin{eqnarray*}
{\|f_N\|}_{H^\beta(S)} &=& {\left(\int_0^\infty {\left(\lambda^2 + \frac{Q^2}{4}\right)}^\beta \: \widehat{f_N}(\lambda)^2 \: {|{\bf c}(\lambda)|}^{-2} d\lambda\right)}^{\frac{1}{2}} \\
&=& {\left(\frac{1}{N}\int_{N-N^{\left(1-\frac{a}{2}\right)}}^{N+N^{\left(1-\frac{a}{2}\right)}} {\left(\lambda^2 + \frac{Q^2}{4}\right)}^\beta \: \eta \left(-N^{\left(\frac{a}{2}-1\right)} \lambda + N^{\frac{a}{2}}\right)^2 \: d\lambda\right)}^{\frac{1}{2}} \\
& \lesssim & {\left(\frac{1}{N}\int_{N-N^{\left(1-\frac{a}{2}\right)}}^{N+N^{\left(1-\frac{a}{2}\right)}} N^{2\beta} \: d\lambda\right)}^{1/2} \\
& \lesssim & N^{\beta - \frac{a}{4}}\:.
\end{eqnarray*}
Thus, we have
\begin{equation} \label{counter_eq2}
{\|f_N\|}_{H^\beta(S)} \to 0\:,\:\text{ as } N \to \infty\:,\:\text{ if } \beta <a/4\:.
\end{equation}
Hence in view of (\ref{counter_eq2}), in order to prove the failure of (\ref{counter_eq1}), it suffices to show that there exists $N_0 \in \N$ such that for all $N > N_0$, 
\begin{equation} \label{counter_eq3}
\|T f_N\|_{L^2(B_1)} > c_3\:,
\end{equation}
for some constant $c_3>0$, independent of $N$\:. To prove $(\ref{counter_eq3})$, we consider for $\varepsilon \in (0,1/2)$ and integers $N \ge 2$, the sets,
\begin{equation*}
A_{N,\:\varepsilon}:= \left\{s \mid a\varepsilon N^{a-1}< s <2a\varepsilon N^{a-1}\right\}\:.
\end{equation*}
Now as $a \in (0,1)$, for $\varepsilon \in (0,1/2)$ and $N \ge 2$, we note that $A_{N,\:\varepsilon} \subset (0,1)$. Henceforth in our proof, we work under the assumption that $s \in A_{N,\:\varepsilon}$. By putting $M=0$ in Lemma \ref{bessel_series_expansion}, we expand $\varphi_\lambda$ in the Bessel series: 
\begin{equation} \label{counter_eq4}
\varphi_\lambda(s)= c_0 {\left(\frac{s^{n-1}}{A(s)}\right)}^{1/2} \J_{\frac{n-2}{2}}(\lambda s) + E_1(\lambda,s)\:.
\end{equation}
We recall the definition,
\begin{equation} \label{counter_eq5}
\J_{\frac{n-2}{2}}(\lambda s)= 2^{\frac{n-2}{2}} \: \pi^{1/2} \: \Gamma\left(\frac{n-1}{2}\right) \frac{J_{\frac{n-2}{2}}(\lambda s)}{(\lambda s)^{\left(\frac{n-2}{2}\right)}}\:.
\end{equation}
Now choosing an integer $N_1 \ge 2$, so that
\begin{equation*}
N^a_1 - N^{a/2}_1 > \frac{B_n}{a\varepsilon}\:,
\end{equation*}
where $B_n=\max\left\{A_{\frac{n-2}{2}},1\right\}$ ($A_{\frac{n-2}{2}}$ is as in Lemma \ref{bessel_function_expansion}), note that for $N \ge N_1$,  we have for $\lambda \in Supp(\widehat{f_N})$,
\begin{equation*}
\frac{B_n}{s} < \frac{B_n}{a\varepsilon N^{a-1}} < N- N^{\left(1-\frac{a}{2}\right)} < \lambda \:.
\end{equation*} 
Thus for $s \in A_{N,\:\varepsilon}$ and $\lambda \in Supp(\widehat{f_N})$, as $\lambda s>1$, by the estimates (\ref{bessel_error_estimate}) we get that
\begin{equation*}
\left|E_1(\lambda,s)\right| \lesssim s^2\: {(\lambda s)}^{-\frac{n+1}{2}}\:.
\end{equation*}
Moreover, as $\lambda s> A_{\frac{n-2}{2}}$, by Lemma \ref{bessel_function_expansion},
\begin{eqnarray} \label{counter_eq6}
J_{\frac{n-2}{2}}(\lambda s) &=& \sqrt{\frac{2}{\pi}} \frac{1}{{(\lambda s)}^{1/2}} \cos \left(\lambda s - \frac{\pi}{2}\left(\frac{n-2}{2}\right) - \frac{\pi}{4}\right) + \tilde{E}_{\frac{n-2}{2}}(\lambda s) \nonumber \\
&=& \sqrt{\frac{1}{2\pi}} \frac{1}{{(\lambda s)}^{1/2}} \left\{e^{i\left(\lambda s - \frac{\pi}{4}(n-1)\right)} + e^{-i\left(\lambda s - \frac{\pi}{4}(n-1)\right)}\right\} + \tilde{E}_{\frac{n-2}{2}}(\lambda s)\:,
\end{eqnarray}
where
\begin{equation*}
\left|\tilde{E}_{\frac{n-2}{2}}(\lambda s)\right| \lesssim \frac{1}{{(\lambda s)}^{3/2}}\:.
\end{equation*}
Then pluggining (\ref{counter_eq6}) and (\ref{counter_eq5}) in (\ref{counter_eq4}), we obtain 
\begin{equation} \label{counter_eq7}
\varphi_\lambda(s)= c {\left(\frac{s^{n-1}}{A(s)}\right)}^{1/2} \frac{1}{{(\lambda s)}^{\frac{n-1}{2}}} \left\{e^{i\left(\lambda s - \frac{\pi}{4}(n-1)\right)} + e^{-i\left(\lambda s - \frac{\pi}{4}(n-1)\right)}\right\} + \mathscr{E}_3(\lambda,s)\:,
\end{equation}
where
\begin{equation*}
\mathscr{E}_3(\lambda,s) = E_1(\lambda,s) + c {\left(\frac{s^{n-1}}{A(s)}\right)}^{1/2} \:\frac{\tilde{E}_{\frac{n-2}{2}}(\lambda s)}{(\lambda s)^{\left(\frac{n-2}{2}\right)}}\:,
\end{equation*}
and thus
\begin{eqnarray} \label{counter_eq8}
\left|\mathscr{E}_3(\lambda,s)\right| & \lesssim &  \left\{s^2 {(\lambda s)}^{-\frac{n+1}{2}} + {(\lambda s)}^{-\frac{n+1}{2}} \right\} \nonumber \\
& \lesssim & {(\lambda s)}^{-\frac{n+1}{2}}\:.
\end{eqnarray}
Thus for $s \in A_{N,\:\varepsilon}$ and $\lambda \in Supp(\widehat{f_N})$, we invoke the decomposition of $\varphi_\lambda$ given by (\ref{counter_eq7}) to write
\begin{equation} \label{counter_eq9}
Tf_N(s)= U_1f_N(s) + U_2f_N(s) + U_3f_N(s)\:,
\end{equation}
where
\begin{eqnarray*}
U_1f_N(s)&=& c \int_{N-N^{\left(1-\frac{a}{2}\right)}}^{N+N^{\left(1-\frac{a}{2}\right)}} {\left(\frac{s^{n-1}}{A(s)}\right)}^{1/2} \frac{1}{{(\lambda s)}^{\frac{n-1}{2}}} \: e^{i\left(\lambda s - \frac{\pi}{4}(n-1)\right)}  \:e^{it(s)\lambda^a}\:\widehat{f_N}(\lambda)\: {|{\bf c}(\lambda)|}^{-2}\: d\lambda\:, \\
U_2f_N(s)&=& c \int_{N-N^{\left(1-\frac{a}{2}\right)}}^{N+N^{\left(1-\frac{a}{2}\right)}} {\left(\frac{s^{n-1}}{A(s)}\right)}^{1/2} \frac{1}{{(\lambda s)}^{\frac{n-1}{2}}} \: e^{-i\left(\lambda s - \frac{\pi}{4}(n-1)\right)}  \:e^{it(s)\lambda^a}\:\widehat{f_N}(\lambda)\: {|{\bf c}(\lambda)|}^{-2}\: d\lambda\:,\\
U_3f_N(s)&=& \int_{N-N^{\left(1-\frac{a}{2}\right)}}^{N+N^{\left(1-\frac{a}{2}\right)}} \mathscr{E}_3(\lambda,s) \:e^{it(s)\lambda^a}\:\widehat{f_N}(\lambda)\: {|{\bf c}(\lambda)|}^{-2}\: d\lambda \:.
\end{eqnarray*}
By plugging in the definition of $\widehat{f_N}$ in terms of $\eta$ and then performing the change of variable $\xi=-N^{\left(\frac{a}{2}-1\right)} \lambda + N^{\frac{a}{2}}$, we get
\begin{eqnarray*}
&&U_2f_N(s)\\
&=& \frac{c\:e^{i\frac{\pi}{4}(n-1)}}{{A(s)}^{\frac{1}{2}}} N^{-\frac{1}{2}} \int_{N-N^{\left(1-\frac{a}{2}\right)}}^{N+N^{\left(1-\frac{a}{2}\right)}} e^{i\left\{-\lambda s +t(s)\lambda^a \right\}}  \:\eta \left(-N^{\left(\frac{a}{2}-1\right)} \lambda + N^{\frac{a}{2}}\right)\: \frac{{|{\bf c}(\lambda)|}^{-1}}{\lambda^{\frac{n-1}{2}}}\: d\lambda \\
&=& \frac{c\:e^{i\frac{\pi}{4}(n-1)}}{{A(s)}^{\frac{1}{2}}}N^{\frac{1}{2}(1-a)} \bigintssss_{-1}^1 e^{i \theta(\xi)}\:\zeta(\xi)\:d\xi\:,
\end{eqnarray*}
where 
\begin{eqnarray*}
&& \theta(\xi)= \left(N^{\left(1-\frac{a}{2}\right)} \xi - N\right)s + t(s) \left(-N^{\left(1-\frac{a}{2}\right)} \xi + N\right)^a \:, \\
&& \zeta(\xi)= \eta(\xi)\: \frac{{\left|{\bf c}\left(-N^{\left(1-\frac{a}{2}\right)} \xi + N\right)\right|}^{-1}}{\left(-N^{\left(1-\frac{a}{2}\right)} \xi + N\right)^{\frac{n-1}{2}}}\:.
\end{eqnarray*}
Now for $|\xi|<1$, by the generalized Binomial expansion, we get
\begin{equation*}
\theta(\xi)=\left(t(s)N^a -Ns\right)+\left(N^{\left(1-\frac{a}{2}\right)}s-a\:t(s)N^{\frac{a}{2}}\right)\xi +\frac{a(a-1)}{2}t(s)\xi^2 +\mathcal{O}\left(t(s)N^{-\frac{a}{2}}\right)\:.
\end{equation*}
Now for $s \in A_{N,\:\varepsilon}$, letting
\begin{equation*}
t(s)=\frac{sN^{1-a}}{a}\:,
\end{equation*}
we get rid of the linear term in $\xi$ to obtain,
\begin{equation*}
\theta(\xi)= Ns\left(\frac{1}{a}-1\right)+\frac{(a-1)}{2}sN^{1-a}\xi^2 +\mathcal{O}(1)\:.
\end{equation*}
Thus 
\begin{equation} \label{counter_eq10}
\left|U_2f_N(s)\right| \gtrsim A(s)^{-\frac{1}{2}}\:N^{\frac{1}{2}(1-a)}\left|\int_{-1}^1 e^{i\frac{(a-1)}{2}s'\xi^2} \:\eta(\xi)\: \frac{{\left|{\bf c}\left(-N^{\left(1-\frac{a}{2}\right)} \xi + N\right)\right|}^{-1}}{\left(-N^{\left(1-\frac{a}{2}\right)} \xi + N\right)^{\frac{n-1}{2}}}\:d\xi\right|\:,
\end{equation}
where we have made the substitution $s'=sN^{1-a}$ above. Then as $s \in A_{N,\:\varepsilon}$, it follows that $s' \in (a\varepsilon,2a\varepsilon)$\:. We further note that for $N \ge 2$ and $|\xi|<1$,
\begin{equation*}
\frac{{\left|{\bf c}\left(-N^{\left(1-\frac{a}{2}\right)} \xi + N\right)\right|}^{-1}}{\left(-N^{\left(1-\frac{a}{2}\right)} \xi + N\right)^{\frac{n-1}{2}}} \asymp 1\:.
\end{equation*}
Then expanding the integral appearing in the right hand side of (\ref{counter_eq10}) in terms of sine and cosine, using the triangle inequality, the above information, properties of $\eta$ and properties of sine and cosine, we get that for sufficiently small $\varepsilon \in \left(0,1/2\right)$, there exists $c'>0$, independent of $N$ such that
\begin{equation}\label{counter_estimate1}
\left|U_2f_N(s)\right|> c' \:A(s)^{-\frac{1}{2}}\:N^{\frac{1}{2}(1-a)}\:.
\end{equation}

We next show that for large $N$, the growths of both $U_1f_N$ and $U_3f_N$ are subordinated by $U_2f_N$. We first focus on $U_3f_N$. For $s \in A_{N,\:\varepsilon}$, using the local growth asymptotics of the density function (\ref{density_function}), the large frequency asymptotics of ${|{\bf c}(\cdot)|}^{-2}$ given by (\ref{plancherel_measure}), the remainder term estimates (\ref{counter_eq8}) and plugging in the definition of $\what{f_N}$ in terms of $\eta$, we get that there exists an integer $N_2 \ge 2$ such that for all $N \ge N_2$,
\begin{eqnarray} \label{counter_estimate2}
\left|U_3f_N(s)\right| &\le & c\:s^{-\frac{n+1}{2}}\:N^{-\frac{1}{2}}\int_{N-N^{\left(1-\frac{a}{2}\right)}}^{N+N^{\left(1-\frac{a}{2}\right)}}\lambda^{-\frac{n+1}{2}}\: {|{\bf c}(\lambda)|}^{-1}\: d\lambda \nonumber\\
&\le & c\: A(s)^{-\frac{1}{2}}\:s^{-1}\:N^{-\frac{1}{2}} \int_{N-N^{\left(1-\frac{a}{2}\right)}}^{N+N^{\left(1-\frac{a}{2}\right)}} \frac{d\lambda}{\lambda} \nonumber\\
&\le & \frac{c}{a\varepsilon}\: A(s)^{-\frac{1}{2}}\:N^{\left(\frac{1}{2}-a\right)} \log\left(\frac{N+N^{\left(1-\frac{a}{2}\right)}}{N-N^{\left(1-\frac{a}{2}\right)}}\right)\nonumber\\
&<& c''\:A(s)^{-\frac{1}{2}}\:N^{\frac{1}{2}(1-3a)}\:,
\end{eqnarray}
for some constant $c''>0$, independent of $N$. The inequality in the fourth line follows from the elementary estimate
\begin{equation*}
\log\left(\frac{x+1}{x-1}\right) \lesssim \frac{1}{x}\:,
\end{equation*}
which is valid for all large positive $x$.

We now turn to estimate $U_1f_N$. Again proceeding as in the case of $U_2f_N$, we get
\begin{equation*}
U_1f_N(s) = \frac{c\:e^{-i\frac{\pi}{4}(n-1)}}{{A(s)}^{\frac{1}{2}}}N^{\frac{1}{2}(1-a)} \bigintssss_{-1}^1 e^{i \theta(\xi)}\:\zeta(\xi)\:d\xi\:,
\end{equation*}
where 
\begin{eqnarray*}
&& \theta(\xi)= Ns- N^{\left(1-\frac{a}{2}\right)}s \xi + t(s) \left(-N^{\left(1-\frac{a}{2}\right)} \xi + N\right)^a \:, \\
&& \zeta(\xi)= \eta(\xi)\: \frac{{\left|{\bf c}\left(-N^{\left(1-\frac{a}{2}\right)} \xi + N\right)\right|}^{-1}}{\left(-N^{\left(1-\frac{a}{2}\right)} \xi + N\right)^{\frac{n-1}{2}}}\:.
\end{eqnarray*}
Now integration-by-parts yield,
\begin{equation} \label{counter_eq11}
\int_{-1}^1 e^{i\theta(\xi)}\:\zeta(\xi)\:d\xi = i \int_{-1}^1 e^{i\theta(\xi)}\: \left[\frac{\zeta'(\xi)}{\theta'(\xi)}-\frac{\zeta(\xi)\theta''(\xi)}{{\left(\theta'(\xi)\right)}^2}\right]\: d\xi\:.
\end{equation}
The expression (\ref{counter_eq11}) prompts us to obtain pointwise estimates of derivatives of the functions $\theta$ and $\zeta$\:. In this regard, we first note that
\begin{equation*}
\theta'(\xi)= - N^{\left(1-\frac{a}{2}\right)}\left[s+t(s)a\left(-N^{\left(1-\frac{a}{2}\right)} \xi + N\right)^{a-1}\right]
\end{equation*}  
Thus for $|\xi|<1$ and $s \in A_{N,\:\varepsilon}$,
\begin{equation} \label{counter_eq12}
|\theta'(\xi)| \ge N^{\left(1-\frac{a}{2}\right)}s>a\varepsilon N^\frac{a}{2}\:.
\end{equation}
We next note that
\begin{equation*}
\theta''(\xi)= t(s)a(a-1)\left(-N^{\left(1-\frac{a}{2}\right)} \xi + N\right)^{a-2}N^{2-a}\:.
\end{equation*}
Recalling that $|\xi|<1$, $t(s)=(sN^{1-a})/a$ and $s \in A_{N,\:\varepsilon}$, we get that there exists an integer $N_3 \ge 2$ such that for all $N \ge N_3$,
\begin{equation} \label{counter_eq13}
|\theta''(\xi)| \le c N^{a-2}N^{2-a}=c\:.
\end{equation}
Now recalling the definition of $\zeta$, we note that as $N\ge 2$ and $|\xi|<1$, 
\begin{equation*}
\frac{{\left|{\bf c}\left(-N^{\left(1-\frac{a}{2}\right)} \xi + N\right)\right|}^{-1}}{\left(-N^{\left(1-\frac{a}{2}\right)} \xi + N\right)^{\frac{n-1}{2}}} \asymp 1\:.
\end{equation*}
Hence
\begin{equation} \label{counter_eq14}
|\zeta(\xi)| \le c\:.
\end{equation}
We next note that
\begin{eqnarray*}
\zeta'(\xi) &=& \eta'(\xi)\:\frac{{\left|{\bf c}\left(-N^{\left(1-\frac{a}{2}\right)} \xi + N\right)\right|}^{-1}}{\left(-N^{\left(1-\frac{a}{2}\right)} \xi + N\right)^{\frac{n-1}{2}}}\: +\: \eta(\xi)\:\frac{\frac{d}{d\xi}\left({\left|{\bf c}\left(-N^{\left(1-\frac{a}{2}\right)} \xi + N\right)\right|}^{-1}\right)}{\left(-N^{\left(1-\frac{a}{2}\right)} \xi + N\right)^{\frac{n-1}{2}}}  \\
&&- \eta(\xi)\:{\left|{\bf c}\left(-N^{\left(1-\frac{a}{2}\right)} \xi + N\right)\right|}^{-1}\:\frac{\frac{d}{d\xi}\left(\left(-N^{\left(1-\frac{a}{2}\right)} \xi + N\right)^{\frac{n-1}{2}}\right)}{\left(-N^{\left(1-\frac{a}{2}\right)} \xi + N\right)^{n-1}}\:.
\end{eqnarray*}
Then using the pointwise and derivative estimates of ${\left|\bf c(\cdot)\right|}^{-2}$ given in (\ref{plancherel_measure}) and (\ref{c-fn_derivative_estimates}), we get that an integer $N_4 \ge 2$, such that for all $N \ge N_4$,
\begin{equation} \label{counter_eq15}
|\zeta'(\xi)| \le c \left(1+ N^{-\frac{a}{2}} +  N^{-\frac{a}{2}}\right) \le c\:.
\end{equation}
Thus for $s \in A_{N,\:\varepsilon}$, using (\ref{counter_eq11})-(\ref{counter_eq15}), there exists an integer $N_5 \ge 2$, such that for all $N\ge N_5$,
\begin{eqnarray} \label{counter_estimate3}
\left|U_1f_N(s)\right| &\le & c\:A(s)^{-\frac{1}{2}}\:N^{\frac{1}{2}(1-a)} \left[N^{-\frac{a}{2}}\:+\:N^{-a} \right] \nonumber \\
& < & c'''\:\:A(s)^{-\frac{1}{2}}\:N^{\frac{1}{2}(1-2a)}  \:,
\end{eqnarray}
for some positive constant $c'''$ independent of $N$. 

Hence setting $N_0=\displaystyle\max_{1 \le j \le 5}N_j$, for all $N>N_0$ and $s \in A_{N,\:\varepsilon}$, in view of the decomposition (\ref{counter_eq9}) we get from (\ref{counter_estimate1}), (\ref{counter_estimate2}) and (\ref{counter_estimate3}) that
\begin{equation*}
\left|Tf_N(s)\right| \ge  \left|U_2f_N(s)\right| - \left|U_1f_N(s)\right| - \left|U_3f_N(s)\right| > c_4\:A(s)^{-\frac{1}{2}}\:N^{\frac{1}{2}(1-a)}\:,
\end{equation*}
for some positive constant $c_4$, independent of $N$\:. This then implies that for $N>N_0$,
\begin{equation*}
\|Tf_N\|_{L^2(B_1)} > c_4\left(\int_{a\varepsilon N^{a-1}}^{2a\varepsilon N^{a-1}} N^{1-a}\:ds\right)^{\frac{1}{2}}=c_4 (a\varepsilon)^\frac{1}{2}\:.
\end{equation*}
Then setting $c_3=c_4 (a\varepsilon)^\frac{1}{2}$, we get (\ref{counter_eq3}) and hence the proof of Theorem \ref{sharpness_thm} for the Fractional Schr\"odinger equation of degree $a$, corresponding to $\tilde{\Delta}$, is completed. 

Now, the general case of the asymptotically concave dispersive equations of degree $a$ follows from the result for the fractional Schr\"odinger equation which we just proved and the transference principle (Lemma \ref{transference_principle}). Indeed, in their case, if the estimate (\ref{maximal_bddness_inequality}) is true for some $\beta_0<a/4$, then it would also be true for any $\beta>\beta_0$. Then by the transference principle Lemma \ref{transference_principle} and Remark \ref{concavity_transference}, (\ref{maximal_bddness_inequality}) is also true for the fractional Schr\"odinger equation of degree $a$, for any $\beta>\beta_0$ and hence in particular for the choice 
\begin{equation*}
\beta = \frac{1}{2}\left(\beta_0 + \frac{a}{4}\right) < \frac{a}{4}\:.
\end{equation*} 
But that is a contradiction. This completes the proof of Theorem \ref{sharpness_thm}.

\section{Concluding remarks}
In this section, we make some remarks and pose some new problems:
\begin{enumerate}
\item Exact analogues of Theorems \ref{maximal_bddness_thm} and \ref{sharpness_thm} can be obtained for $\R^n$. This will generalize Walther's results in \cite{Wa2,Wa3} for the fractional Schr\"odinger equations of degree $a \in (0,1)$, to the general class of {\em asymptotically concave dispersive equations of degree} $a \in (0,1)$. The key here is to obtain an analogue of the transference principle (Lemma \ref{transference_principle}) in the setting of $\R^n$. 
\item Application of the transference principle (Lemma \ref{transference_principle}) crucially uses the concavity of the phase. This approach does not work in the case when $a>2$ (see \cite[point (i) of Remark $1.5$]{Dewan2}).
\item A natural question is to study the behaviour at the end-point $\beta=a/4$. In fact, this problem is still open  in $\R^n$ itself.  
\end{enumerate}

\section*{Acknowledgements} 
The author is supported by a research fellowship of Indian Statistical Institute.

\bibliographystyle{amsplain}

\end{document}